\documentclass[review]{elsarticle}

\usepackage{amsmath,amssymb,amsthm,bm,mathtools}
\usepackage{graphicx}
\usepackage{subcaption}
\usepackage{booktabs}
\usepackage{multirow}
\usepackage{algorithm}
\usepackage{algpseudocode}
\usepackage{hyperref}
\usepackage{color}
\usepackage{enumitem}
\usepackage{multicol}

\newtheorem{theorem}{Theorem}

\newcommand{\R}{\mathbb{R}}

\newcommand{\Lop}{\mathcal{L}}

\begin{document}

\begin{frontmatter}

\title{IGA-ODIL: Optimizing DIscretre robust Loss with Isogeometric Analysis to solve forward and inverse problems faster using machine learning tools.}

\author{Maciej Paszy\'nski, Tomasz S\l{}u\.zalec}
\address{Faculty of Computer Science, AGH University of Krakow, Poland}

\begin{abstract}
Physics-informed neural networks (PINNs) formulate the solution of partial differential equations as residual minimization problems over neural network parameterizations. Although highly flexible, optimization of PINNs using modern variants of Stochastic Gradient Descent algorithms is expensive. On the other hand, iterative computation of PINN parameterization using the Gauss-Newton method suffers from convergence  difficulties, dense Jacobian structures, and poor conditioning that limit the effectiveness of second-order optimization methods. In this work, we introduce IGA-ODIL, a spline-based residual minimization framework combining ideas from Optimizing DIscrete Loss (ODIL), robust variational residual minimization, and Isogeometric Analysis (IGA). Instead of neural-network parameterizations of PINNs, the unknown solution is represented by smooth B-spline basis functions, leading to sparse structured Jacobians and efficient Gauss--Newton optimization.
We also derive robust residual formulations based on weighted Gram operators, making the loss function related with the true error. 
The resulting systems inherit locality, sparsity, and approximation-theoretic properties of classical finite element and isogeometric methods while preserving the residual-learning philosophy of scientific machine learning.
The proposed methodology is evaluated on several benchmark problems, including Poisson equations, convection-dominated advection--diffusion equations, Helmholtz problems with highly oscillatory solutions, nonlinear Allen--Cahn equations, and inverse Helmholtz parameter identification. Numerical experiments demonstrate orders-of-magnitude speedups compared with PINNs and CRVPINNs while maintaining high accuracy and robustness.
\end{abstract}

\begin{keyword}
Physics-informed neural networks \sep Isogeometric analysis \sep Residual minimization \sep Gauss--Newton methods \sep Scientific machine learning \sep PDE-constrained optimization
\end{keyword}

\end{frontmatter}

\section{Introduction}

Scientific machine learning has recently emerged as a rapidly developing research direction at the intersection of numerical analysis, partial differential equations (PDEs), optimization, and machine learning. Among the most influential approaches are Physics-Informed Neural Networks (PINNs), introduced by Raissi, Perdikaris, and Karniadakis~\cite{raissi}. PINNs formulate the numerical solution of PDEs as residual minimization problems in which the unknown solution is represented by a neural network and the governing equations are enforced through collocation-based residual losses.
Numerous extensions and variants have appeared, including Variational PINNs (VPINNs)~\cite{vpinn}, Deep Ritz methods~\cite{deepritz}, adaptive and domain-decomposition formulations, as well as operator-learning approaches such as Fourier Neural Operators~\cite{operatorlearning}. More general overviews of scientific machine learning and physics-informed learning frameworks can be found in~\cite{karniadakis_book}.
Recent applications show
that PINNs are used particularly often in fluid mechanics, where they support
flow reconstruction, inverse parameter identification, and data assimilation
from sparse or noisy measurements \cite{Cai2021}. They have been applied to
wake flows, biomedical flows, supersonic flows, and experimental stratified
flows, demonstrating their potential for combining measurement data with
Navier--Stokes-type models \cite{Cai2021,Zhu2024}.
PINNs have been used for heat transfer in porous media, where they
can estimate temperature fields, heat fluxes, and effective thermal
conductivity without requiring dense labeled data \cite{Xu2023}. They have
also been applied to two-phase flow and heat transfer problems, where the
physics-informed loss helps enforce the governing equations while learning from
simulation data \cite{Jalili2024}. More recently, PINNs have been considered
as components of digital twins, where the goal is to combine sensor data,
reduced-order modeling, and physical constraints for real-time prediction and
uncertainty-aware monitoring \cite{Yang2024}. 

Despite their flexibility, PINNs suffer from several important numerical and theoretical difficulties. Optimization landscapes associated with deep neural-network parameterizations are highly nonconvex and frequently exhibit poor conditioning, flat directions, vanishing gradients, and parameter symmetries~\cite{wang2021understanding}. As a consequence, first-order optimization methods such as stochastic gradient descent (SGD) and Adam~\cite{Adam} often converge slowly and unreliably. Furthermore, second-order methods such as Newton or Gauss–Newton optimization are difficult to apply effectively because the resulting Jacobians are typically large, dense, and ill-conditioned~\cite{nocedal}.

An additional difficulty is that the classical PINN residual loss generally lacks robustness with respect to physically meaningful norms. In many cases, the residual loss does not accurately represent the discretization error measured in Sobolev norms, leading to poor correlation between optimization convergence and physical solution quality~\cite{rvpinn}. This observation motivated the development of Robust Variational PINNs~\cite{rvpinn}, based on the weak residuals of Variational PINNs (VPINNs)~\cite{vpinn}. 
Additionally, to speed up the loss estimation,  Collocation-based Robust Variational Physics-Informed Neural Networks (CRVPINNs)~\cite{crvpinn} have been recently proposed. In particular, the robust loss takes the form
\begin{equation}
 \Phi(u)=R(u)^T G^{-1}R(u),
\end{equation}
where $G$ is a Gram matrix associated with a suitable variational inner product \cite{rvpinn}, and $R(u)$ is a weak residual in the case of VPINNs or a discrete weak residual (computed with Kronecker product delta test functions) in the case of CRVPINNs.
Although robust residual formulations improve the stability of PINN losses, the optimization challenges associated with deep nonlinear parameterizations remain. Neural-network Jacobians remain globally coupled and dense, limiting the scalability and effectiveness of second-order optimization methods.

A complementary research direction was recently proposed by Karnakov, Litvinov, and Koumoutsakos through the Optimizing DIscrete Loss (ODIL) framework~\cite{odil}. The central idea of ODIL is that residual minimization does not fundamentally require neural-network parameterizations. Instead, the unknown solution is represented directly through discrete degrees of freedom optimized using PDE residual losses. ODIL demonstrated that many scientific machine learning ideas can be interpreted as optimization-based PDE discretizations rather than purely neural-network methods.
ODIL interprets PDE solving as optimization over discrete degrees of freedom using residual losses evaluated at collocation points. The resulting formulation preserves many advantages of scientific machine learning, including natural extensions to inverse problems and PDE-constrained optimization.

However, point-based ODIL discretizations are not globally continuous like neural network representations. They also lack several important properties associated with classical spline and finite element approximation spaces. In particular, point-value parameterizations do not naturally provide high-order smoothness, compactly supported basis functions,
and approximation-theoretic properties associated with spline spaces and classical Galerkin discretizations~\cite{quarteroni}.

Isogeometric Analysis, introduced by Hughes, Cottrell, and Bazilevs~\cite{iga}, establishes spline-based discretizations as powerful alternatives to classical finite element methods. 
B-splines and NURBS provide smooth, high-order approximation spaces with excellent approximation properties, sparse local support, and direct compatibility with CAD geometries~\cite{iga,cottrell,bazilevs,sangalli2015smooth,buffa2012electromagnetics}. 
Recent applications of isogeometric analysis (IGA) show that it is widely used
in problems where smoothness, high-order accuracy, and exact geometry
representation are important. In fracture mechanics, IGA has been applied to
phase-field models of brittle, anisotropic, and polycrystalline fracture, where
the higher continuity of spline spaces is advantageous for fourth-order and
gradient-enhanced formulations \cite{Chen2020,Goswami2020,NguyenThanh2020,Greco2024}.
Another active direction is design and topology optimization, where the common
representation of geometry and analysis fields supports design-through-analysis
workflows \cite{Gao2020}. Recent developments also extend IGA to complex CAD
geometries using analysis-suitable T-splines and G-spline surfaces
\cite{Wei2022,Wen2023}. In fluid mechanics and aerodynamics, IGA has been used
for turbulent flows, moving domains, fluid--structure interaction, and
space--time simulations of complex engineering configurations
\cite{Bazilevs2023,Kuraishi2025}. Applications in biomechanics include cardiac
electrophysiology and ventricular mechanics, where smooth patient-specific
geometries and coupled multiphysics models are essential
\cite{Torre2023,Willems2024}. 

We propose an unified framework IGA-ODIL. The proposed methodology replaces neural-network parameterizations with smooth B-spline approximation spaces while preserving the residual-minimization philosophy of scientific machine learning. Specifically, the unknown solution is represented as

\begin{equation}
u_h(x,y)=
\sum_{i=1}^{N_x}
\sum_{j=1}^{N_y}
c_{ij}
B_i^{(p)}(x)
B_j^{(q)}(y),
\label{eq:intro_spline}
\end{equation}
where the trainable variables are spline coefficients rather than neural-network parameters.
The key observation of this work is that spline parameterizations fundamentally alter the structure of residual minimization problems. The Jacobian of spline discretization inherits the sparse locality structure of classical PDE discretizations. Consequently, the Gauss–Newton systems become sparse and structured;
second-order optimization becomes computationally tractable;
least-squares PDE structure reappears;
and locality and scalability are recovered.
In contrast to neural-network parameterizations, spline spaces therefore restore many advantages of classical numerical PDE methods while preserving the optimization-based residual-learning formulation characteristic of scientific machine learning.
The proposed framework additionally incorporates robust variational residual formulations using weighted Gram operators. The resulting method combines robust residual minimization,
 smooth spline approximation,
 sparse Gauss-Newton optimization,
 PDE-constrained inverse optimization,
and scientific machine learning formulations.

The methodology is evaluated on several benchmark problems, including Poisson equations, the Eriksson-Johnson model problem, high-frequency Helmholtz problems,
nonlinear Allen–Cahn equations,
and PDE-constrained inverse problems.
The numerical experiments demonstrate substantial computational advantages compared with PINNs and CRVPINNs, including orders-of-magnitude speedups while maintaining high approximation quality and robustness.

The remainder of the paper is organized as follows. Section~2 reviews robust residual minimization and Gauss--Newton optimization for PINNs. Section~3 reformulates the ODIL method for the robust loss case. Section~4 introduces the IGA-ODIL formulation for robust loss and it derives the corresponding sparse Gauss--Newton systems. Section~5 presents numerical experiments for forward PDE benchmarks, including Poisson, advection-diffusion, Helmholtz, and Allen-Cahn problems. Section~6 considers PDE-constrained inverse problems. We conclude the paper in Section~7. We also include an Appendix A with code references and Appendix B with proof of convergence.

\section{PINNs and Robust Residual Minimization}

\subsection{PINN and CRVPINN formulations}

Consider a PDE operator
\begin{equation}
\Lop u=f
\end{equation}
in a domain $\Omega$.
PINNs approximate the solution using a neural network, 
\begin{equation}
    u_\theta(x,y) = \mathrm{NN}(x,y) = \sigma(A_n \sigma ( \cdots \sigma( 
    A_1
    \begin{bmatrix}
    x\\y
    \end{bmatrix}
    + \beta_1) \cdots) + \beta_n)
\end{equation}
where $A_i$ are layer weights,  $\sigma$ are nonlinear activation functions, and $\beta_i$ are biases. In general,  $\theta\in\R^P$ denotes the trainable parameters~\cite{raissi}.
The PINN method is summarized in Figure 1.
The residual evaluated at collocation points $\{{\bf x}_k\}_{k=1}^M$ is
\begin{equation}
 R_k(\theta)=\Lop u_\theta({\bf x}_k)-f({\bf x}_k).
\end{equation}
The classical PINN loss 
\begin{equation}
 \mathcal{J}(\theta)=\frac12\sum_{k=1}^M |R_k(\theta)|^2.
\end{equation}
can be interpreted as a nonlinear least-squares optimization problem over neural-network parameterizations~\cite{deepritz,nocedal}.
The classical way of solving the PINN problem is through some modern variants of the Stochastic Gradient Descent method. The ADAM algorithm \cite{Adam}, which replaces the local gradient with the weighted average gradient from several previous iterations, is one of the most popular choices. The method is summarized in Algorithm 1.
\begin{algorithm}
\caption{Physics-Informed Neural Network (PINN)}
\begin{algorithmic}[1]

\State Construct neural network approximation
$
u_\theta({\bf x})
$

\State Select collocation points
$
\{{\bf x}_k\}_{k=1}^M
$

\State Initialize network parameters
$
\theta^{(0)}
$

\While{not converged}

    \State Evaluate residuals at collocation points
    $
    R_k(\theta^{(n)})
    =
    \Lop u_{\theta^{(n)}}({\bf x}_k)
    -
    f({\bf x}_k),
    \qquad
    k=1,\dots,M
    $

    \State Form residual vector
    $
    R(\theta^{(n)})
    =
    \left[
    R_1(\theta^{(n)}),
    \dots,
    R_M(\theta^{(n)})
    \right]^T
    $

    \State Compute PINN loss
    $
    \mathcal{J}(\theta^{(n)})
    =
    \frac12
    \sum_{k=1}^M
    |R_k(\theta^{(n)})|^2
    +
    \mathcal{L}_{BC}
    $

    \State Compute gradients
    $
    \nabla_\theta
    \mathcal{J}(\theta^{(n)})
    $
    using backpropagation and automatic differentiation

    \State Update neural-network parameters
    $
    \theta^{(n+1)}
    =
    \theta^{(n)}
    -
    \eta
    \nabla_\theta
    \mathcal{J}(\theta^{(n)})
    $
    using Adam, L-BFGS, or related optimizers

\EndWhile

\end{algorithmic}
\end{algorithm}    
PINNs solved with SGD/ADAM methods are generally computationally much more demanding than Finite Element Method/Isogeometric Analysis solvers.

\begin{figure}
\includegraphics[width=\textwidth]{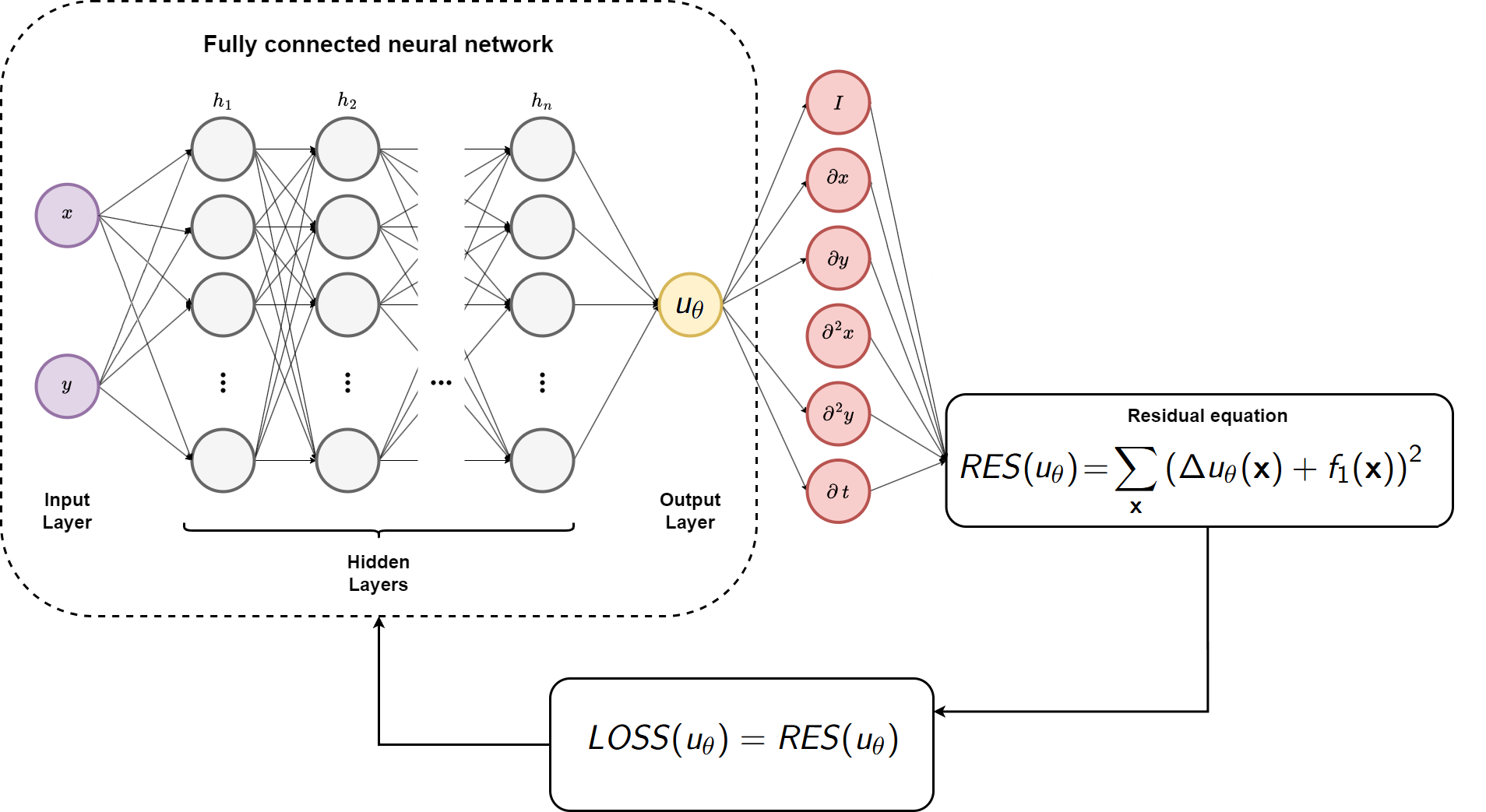}
\caption{PINN (Physics Informed Neural Networks)}
\label{fig1}
\end{figure}

\subsection{Gauss--Newton Optimization for PINNs}

An alternative way to compute $\theta$ coefficients of neural network approximation may be the application of second order optimization methods like the Gauss-Newton.
We can derive the Gauss--Newton method for solving the residual minimization problem when the solution is represented by NN.
Let $u_\theta({\bf x})$ denote NN parameterized by $\theta \in \mathbb{R}^P$
(composition of affine transformations and nonlinear activation functions). 
The PDE residual is given by
\begin{equation}
R_k(\theta) = \mathcal{L} u_\theta({\bf x}_k) - f({\bf x}_k),
\end{equation}
and evaluated at a set of collocation points $\{{\bf x}_k\}_{k=1}^M$:
\begin{equation}
R(\theta) = \big( R_1(\theta), \dots, R_M(\theta) \big)^T=  \big( R({\bf x}_1;\theta), \dots, R({\bf x}_M;\theta) \big)^T \in \mathbb{R}^M.
\end{equation}
The objective is to minimize the nonlinear least-squares functional
\begin{equation}
\mathcal{J}(\theta) = \frac{1}{2} \|R(\theta)\|_2^2.
\end{equation}
The Jacobian matrix is defined as
\begin{equation}
J(\theta) = \frac{\partial R(\theta)}{\partial \theta} \in \mathbb{R}^{M \times P}.
\end{equation}
(here $M=$ \# collocation points, $P=$ \# NN parameters)
Using the chain rule, each row of $J(\theta)$ (single point) is given by
\begin{equation}
\frac{\partial R({\bf x}_k;\theta)}{\partial \theta}
= \frac{\partial}{\partial \theta} \left( \mathcal{L} u_\theta({\bf x}_k) \right)
= \mathcal{L} \left( \frac{\partial u_\theta({\bf x}_k)}{\partial \theta} \right).
\end{equation}
The term $\frac{\partial u_\theta({\bf x})}{\partial \theta}$ can be computed efficiently using backpropagation.
At an iterate $\theta^{(k)}$, we linearize the residual
\begin{equation}
R(\theta^{(k)} + \delta) \approx R(\theta^{(k)}) + J(\theta^{(k)}) \delta.
\end{equation}
The Gauss-Newton step is obtained by minimizing the quadratic model
\begin{equation}
\min_{\delta} \; \| R(\theta^{(k)}) + J(\theta^{(k)}) \delta \|_2^2.
\end{equation}
This leads to the normal equations
\begin{equation}
J(\theta^{(k)})^T J(\theta^{(k)}) \, \delta^{(k)}
= - J(\theta^{(k)})^T R(\theta^{(k)}).
\label{eq:gn_nn}
\end{equation}
The parameters are updated as follows:
\begin{equation}
\theta^{(k+1)} = \theta^{(k)} + \delta^{(k)}.
\end{equation}
The method is summarized in Algorithm 2.
\begin{algorithm}
\caption{Gauss--Newton Training of Physics-Informed Neural Networks}
\begin{algorithmic}[1]

\State Construct neural network approximation $u_\theta({\bf x})$
\State Select collocation points $\{{\bf x}_k\}$
\State Initialize network parameters $\theta^{(0)}$

\While{not converged}

    \State Evaluate residual vector
    $
    R(\theta^{(k)})
    =
    \left[
    R({\bf x}_1;\theta^{(k)}),
    \dots,
    R({\bf x}_{N_r};\theta^{(k)})
    \right]^T
    $

    \State Assemble Jacobian matrix
    $
    J(\theta^{(k)})
    =
    \frac{\partial R(\theta^{(k)})}{\partial \theta}
    $
    using automatic differentiation

    \State Solve the Gauss--Newton system
    $
    \left(
    J^T J
    \right)\delta^{(k)}
    =
    -J^T R
    $

    \State Update neural network parameters
    $
    \theta^{(k+1)}
    =
    \theta^{(k)}
    +
    \delta^{(k)}
    $

\EndWhile


\end{algorithmic}
\end{algorithm}
Such formulations correspond to nonlinear least-squares optimization methods~\cite{nocedal}.
However, the Jacobian $J(\theta)$ is typically large and dense, 
and the resulting systems are large and ill-conditioned.
Neural-network Jacobians are typically dense, highly nonlinear, and poorly conditioned~\cite{wang2021understanding}. In practice, the resulting systems are often singular or numerically unstable, limiting the applicability of second-order optimization methods for large-scale PINN formulations.

\subsection{Gauss--Newton Optimization for CRVPINNs}

Classical residual losses employed by PINNs are generally not equivalent to physically meaningful energy norms~\cite{rvpinn}. Robust variational residual minimization introduces weighted losses of the form
\begin{equation}
 \Phi(u)=\frac12 R(u)^T G^{-1}R(u),
\end{equation}
where $G$ is a symmetric positive definite Gram matrix associated with a variational inner product~\cite{rvpinn,crvpinn}.
Following the discrete weak formulation described in \cite{crvpinn} we have 
\begin{equation}
\frac{1}{\mu}\sqrt{ \Phi(u)}
\le
\|u-u_{exact}\|_{\nabla,h}
\le
\frac{1}{\alpha}\sqrt{ \Phi(u)}.
\end{equation}
Thus, the robust loss becomes equivalent to the discretization of the true error measured in suitable discrete Sobolev norms~\cite{crvpinn}. Classical way of solving CRVPINNs is using Stochastic Gradient Descent or ADAM \cite{Adam} method.
We can also derive the Gauss-Newton method for solving the residual minimization problem when the solution is represented by CRVPINN.
Let $u_\theta(x)$ denote a neural network parameterized by
$\theta \in \mathbb{R}^P$.
The PDE residual is defined as
\begin{equation}
R(x;\theta) = \mathcal{L}u_\theta(x)-f(x),
\end{equation}
and evaluated at collocation points $\{x_k\}_{k=1}^M$:
\begin{equation}
R(\theta)
=
\big(
R(x_1;\theta),
\dots,
R(x_M;\theta)
\big)^T
\in
\mathbb{R}^M.
\end{equation}

Unlike standard PINNs, CRVPINNs employ weighted or norm-equivalent residual minimization to improve stability and convergence properties.
The objective is to minimize the robust least-squares functional
\begin{equation}
\mathcal{J}(\theta)
=
\frac12
R(\theta)^T
G^{-1}
R(\theta),
\end{equation}
where $G$ is a Gram matrix associated with a suitable Sobolev or energy norm.

The Jacobian matrix is defined as
\begin{equation}
J(\theta)
=
\frac{\partial R(\theta)}{\partial \theta}
\in
\mathbb{R}^{M \times P}.
\end{equation}

($M=$ number of collocation points,\quad
$P=$ number of NN parameters)

Using the chain rule,
\begin{equation}
\frac{\partial R(x_k;\theta)}{\partial \theta}
=
\frac{\partial}{\partial \theta}
\left(
\mathcal{L}u_\theta(x_k)
\right)
=
\mathcal{L}
\left(
\frac{\partial u_\theta(x_k)}{\partial \theta}
\right).
\end{equation}
At iteration $\theta^{(k)}$, we linearize the residual:
\begin{equation}
R(\theta^{(k)}+\delta)
\approx
R(\theta^{(k)})
+
J(\theta^{(k)})\delta.
\end{equation}

The Gauss-Newton step minimizes the quadratic model
\begin{equation}
\min_\delta
\;
\|
G^{-1/2}
(
R + J\delta
)
\|_2^2.
\end{equation}

This leads to the weighted normal equations:
\begin{equation}
J^T G^{-1} J \, \delta^{(k)}
=
-
J^T G^{-1} R.
\end{equation}

The parameters are updated by
\begin{equation}
\theta^{(k+1)}
=
\theta^{(k)}
+
\delta^{(k)}.
\end{equation}

The method is summarized in Algorithm 3.
\begin{algorithm}
\caption{Gauss--Newton Optimization for CRVPINNs}
\begin{algorithmic}[1]

\State Construct neural network approximation $u_\theta({\bf x})$

\State Select collocation points $\{{\bf x}_k\}$

\State Assemble Gram matrix $G$

\State Compute LU factorization once:
$
G = LU
$

\State Initialize network parameters $\theta^{(0)}$

\While{not converged}

    \State Evaluate residual vector
    $
    R(\theta^{(k)})
    =
    \left[
    R({\bf x}_1;\theta^{(k)}),
    \dots,
    R({\bf x}_{N_r};\theta^{(k)})
    \right]^T
    $

    \State Assemble the Jacobian matrix
    $
    J(\theta^{(k)})
    =
    \frac{\partial R(\theta^{(k)})}{\partial \theta}
    \in \mathbb{R}^{N_r \times P}
    $
    using automatic differentiation

    \State Solve
    $
    LU\, y_R = R
    $

    \State Compute weighted Jacobian
    $
    LU\, Y_J = J
    $
    where each column of $Y_J$ is obtained by solving
    $
    LU\, y_j = J_j
    $
    with $J_j$ denoting the $j$-th column of $J$

    \State Assemble Gauss--Newton Hessian approximation
    $
    H = J^T Y_J
    $

    \State Assemble right-hand side
    $
    g = J^T y_R
    $

    \State Solve the linearized system
    $
    H \delta^{(k)} = -g
    $

    \State Update neural network parameters
    $
    \theta^{(k+1)}
    =
    \theta^{(k)}
    +
    \delta^{(k)}
    $

\EndWhile


\end{algorithmic}
\end{algorithm}
However, unlike spline discretizations, the Jacobian
$J(\theta)$ is typically large and dense,
and the weighted Hessian approximation
$J^T G^{-1}J$

\section{ODIL and Fast Robust Residual Minimization}

\subsection{ODIL formulation}

\begin{figure}
\centering
\includegraphics[width=\textwidth]{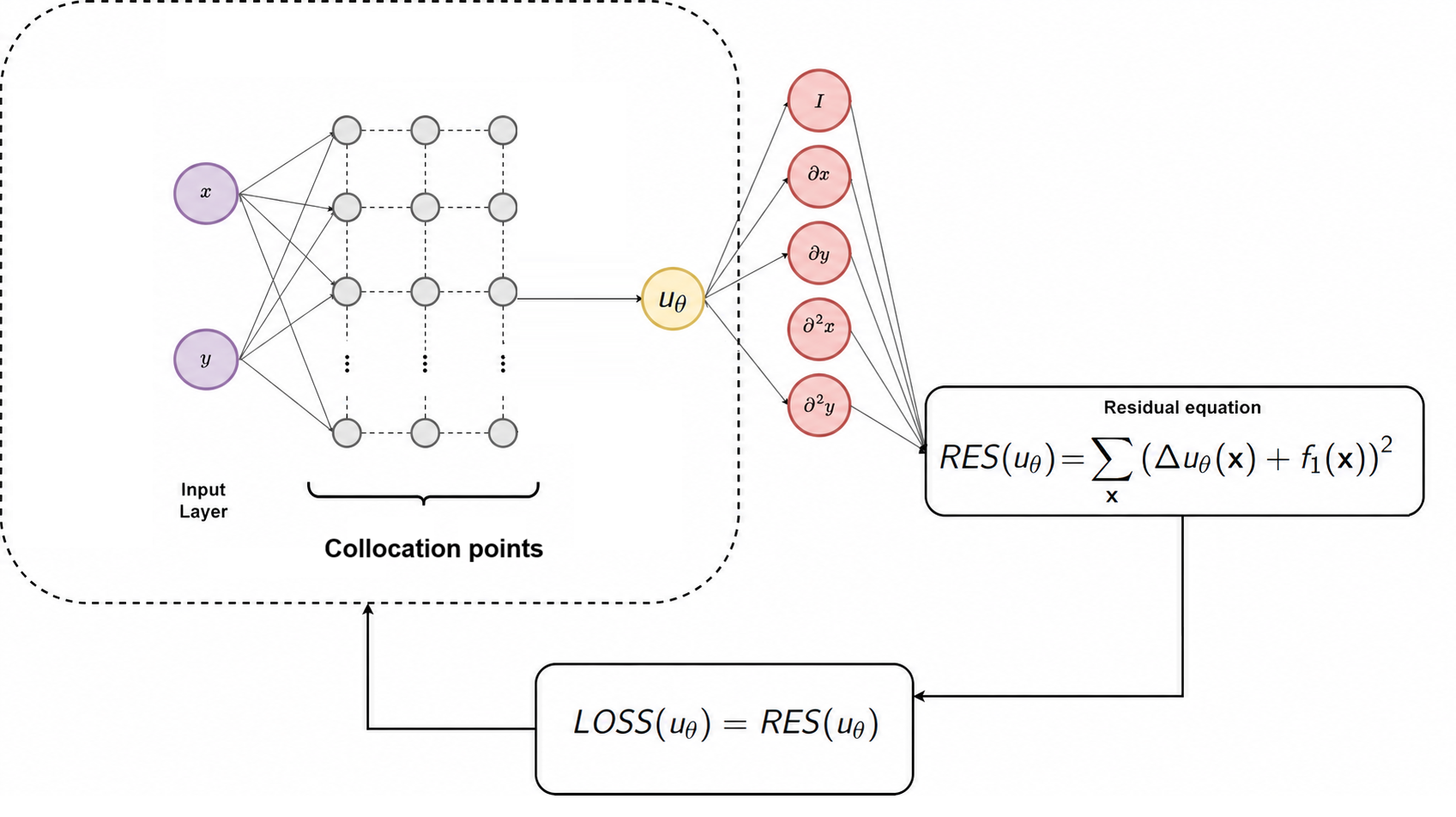}
\caption{ODIL (Optimizing DIscrete Loss)}
\end{figure}

In the ODIL (Optimizing DIscrete Loss) method, the unknown solution is represented directly by discrete values at collocation points.
For the Poisson equation,
we introduce a structured set of collocation points
$
\{(x_i,y_j)\}_{i=1,j=1}^{N_x,N_y},
$
and define the unknown discrete solution values
$
u_{i,j} \approx u(x_i,y_j).
$
The trainable variables are, therefore, the nodal unknowns
$
u \in \mathbb{R}^{N_x\times N_y}.
$
The PDE residual is evaluated directly at collocation points.
For example, using a standard five-point discretization of the Laplacian,

\begin{eqnarray}
R_{i,j}(u)
=
\frac{
u_{i+1,j}
-
2u_{i,j}
+
u_{i-1,j}
}{h_x^2}
+
\frac{
u_{i,j+1}
-
2u_{i,j}
+
u_{i,j-1}
}{h_y^2}
+
f(x_i,y_j).
\end{eqnarray}
The residual vector is obtained by stacking all local residuals:
$
R(u)
=
\left[
R_{1,1}(u),
\dots,
R_{N_x,N_y}(u)
\right]^T.
$
The ODIL loss functional is defined as the discrete residual minimization problem

\begin{equation}
LOSS(u)
=
\|R(u)\|^2
=
\sum_{i,j} R_{i,j}(u)^2.
\end{equation}

To improve robustness and discretization invariance, we introduce the weighted residual functional

\begin{equation}
\Phi(u)
=
\frac12
R(u)^T
G^{-1}
R(u),
\end{equation}

where $G$ denotes the Gram matrix associated with the residual representation.

\subsection{Gauss--Newton Optimization for ODIL}

Using the first-order linearization
$
R(u+\delta)
\approx
R + J\delta,
$
where
$
J(u)
=
\frac{\partial R(u)}{\partial u},
$
the robust functional becomes

\begin{equation}
\Phi(u+\delta)
=
\frac12
(R+J\delta)^T
G^{-1}
(R+J\delta).
\end{equation}

Expanding the quadratic form yields

\begin{equation}
\Phi(u+\delta)
=
\frac12
\left(
R^T G^{-1} R
+
2\delta^T J^T G^{-1} R
+
\delta^T J^T G^{-1} J \delta
\right).
\end{equation}

Differentiating with respect to $\delta$ gives

\begin{equation}
\nabla_\delta \Phi
=
J^T G^{-1} R
+
J^T G^{-1} J \delta.
\end{equation}

Setting the gradient to zero produces the weighted Gauss--Newton equations

\begin{equation}
(J^T G^{-1} J)\delta
=
-
J^T G^{-1} R.
\label{eq:GN_ODIL}
\end{equation}

For linear PDEs, the residual has the affine form $R(u)=Au-b$, 
which implies $J=A.$ Consequently, the Gauss--Newton method reduces to the solution of a weighted least-squares discretization.
To avoid explicit formation of $G^{-1}$, the Gram matrix is factorized once $
G = LU$.
Applications of $G^{-1}$ are then replaced by triangular solves.
The resulting ODIL method is summarized in Algorithm 4.

\begin{algorithm}
\caption{Gauss--Newton Optimization for ODIL}
\begin{algorithmic}[1]

\State Construct collocation points $\{{\bf x}_{i,j}\}$

\State Initialize discrete solution values $u^{(0)}$

\State Assemble Gram matrix $G$

\State Compute LU factorization once:
$
G = LU
$

\While{not converged}

    \State Evaluate residual vector
    $
    R(u^{(k)})
    $

    \State Assemble Jacobian matrix
    $
    J(u^{(k)})
    =
    \frac{\partial R(u^{(k)})}{\partial u}
    $

    \State Solve
    $
    LU\, y_R = R
    $

    \State Compute weighted Jacobian
    $
    LU\, Y_J = J
    $
    where each column of $Y_J$ is obtained by solving
    $
    LU\, y_j = J_j
    $
    with $J_j$ denoting the $j$-th column of $J$

    \State Assemble Gauss--Newton Hessian approximation
    $
    H = J^T Y_J
    $

    \State Assemble right-hand side
    $
    g = J^T y_R
    $

    \State Solve the linearized system
    $
    H\delta^{(k)}
    =
    -g
    $

    \State Update solution
    $
    u^{(k+1)}
    =
    u^{(k)}
    +
    \delta^{(k)}
    $

\EndWhile


\end{algorithmic}
\end{algorithm}
\begin{figure}
\includegraphics[width=0.9\textwidth]{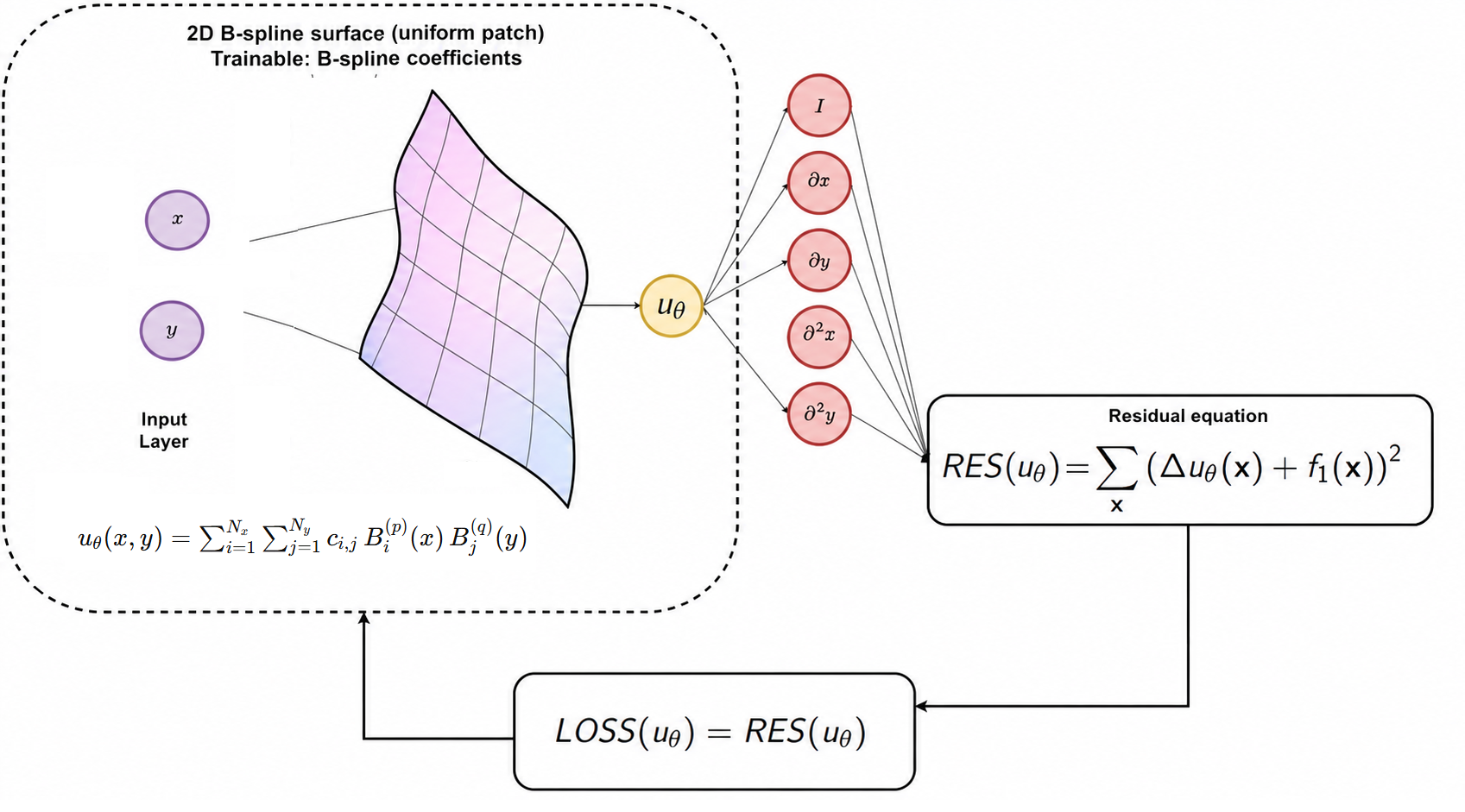}
\caption{IGA-ODIL (Isogeometric Analysis by Optimizing DIscrete Loss)}
\end{figure}
\section{IGA-ODIL and fast Robust Residual Minimization}

\subsection{IGA-ODIL formulation}

In the IGA-ODIL method, the unknown solution is represented using tensor-product B-splines. For example, for the Poisson equation,

\begin{eqnarray}
u_c(x,y) = \sum_{i=1}^{N_x} \sum_{j=1}^{N_y} c_{i,j}\, B_i^{(p)}(x)\, B_j^{(q)}(y), \; \; 
 c \in {\cal M}^{N_x \times N_y} \\
R(u_c)=\sum_{\hat{x},\hat{y}}\left(\frac{\partial^2 u_{c}(\hat{x},\hat{y})}{\partial x^2}+\frac{\partial^2 u_{c}(\hat{x},\hat{y})}{\partial y^2}+f(\hat{x},\hat{y})\right)^2 \\
=\sum_{\hat{x},\hat{y}}\left(
\sum_{i=1}^{N_x} \sum_{j=1}^{N_y} c_{i,j} \left[
\frac{\partial^2 B_i^{(p)}(\hat{x})}{\partial x^2}B_j^{(q)}(\hat{y})+B_i^{(p)}(\hat{x})\frac{\partial^2 B_j^{(q)}(\hat{y})}{\partial^2 y}\right] + f(\hat{x},\hat{y})\right)^2
\end{eqnarray}

The trainable variables are the spline coefficients $c$.
In general, the residual evaluated at collocation points $(\hat{x}_k,\hat{y}_k)$ is
\begin{equation}
R_k(c)=\Lop u_c(\hat{x}_k,\hat{y}_k)-f(\hat{x}_k,\hat{y}_k).
\end{equation}

The robust residual functional is given by

\begin{equation}
\Phi(u)
=
\frac12
R(u)^T G^{-1} R(u).
\end{equation}

\subsection{Gauss-Newton Optimization for IGA-ODIL}

Using the linearization $
R(u+\delta)
\approx
R + J\delta,$ the robust functional 

\begin{equation}
\Phi(u+\delta)
=
\frac12
(R+J\delta)^T
G^{-1}
(R+J\delta).
\end{equation}
\begin{equation}
\Phi(u+\delta)
=
\frac12
\left(
R^T G^{-1} R
+
2\delta^T J^T G^{-1} R
+
\delta^T J^T G^{-1} J \delta
\right).
\end{equation}

Differentiating with respect to $\delta$ gives
$
\nabla_\delta \Phi
=
J^T G^{-1} R
+
J^T G^{-1} J \delta.
$
Setting the gradient to zero yields the weighted normal equations:
\begin{equation}
(J^T G^{-1} J)\delta
=
-
J^T G^{-1} R \label{eq:GN}
\end{equation}
To solve~\eqref{eq:GN}, we require the Jacobian of the residual with respect to the coefficients,
\begin{equation}
J(c) = \frac{\partial R(c)}{\partial c}.
\end{equation}
For linear PDEs the residual has the affine form
$R(c)=Ac-b$, 
which implies $J=A$.
Consequently, Gauss–Newton becomes equivalent to solving a least-squares PDE discretization.
The overall method for solving IGA-ODIL is summarized in Algorithm 5.

\begin{algorithm}
\caption{Gauss--Newton Optimization for IGA--ODIL}
\begin{algorithmic}[1]

\State Construct tensor-product B-spline basis functions
$
\{\phi_{i,j}(x,y)\}
=
\{B_i^{(p)}(x)B_j^{(q)}(y)\}
$

\State Select collocation points $\{{\bf x}_k\}$

\State Initialize spline coefficients $c^{(0)}$

\State Assemble Gram matrix $G$

\State Compute LU factorization once:
$
G = LU
$

\While{not converged}

    \State Evaluate spline solution
    $
    u_h^{(k)}(x,y)
    =
    \sum_{i=1}^{N_x}
    \sum_{j=1}^{N_y}
    c_{i,j}^{(k)}
    B_i^{(p)}(x)
    B_j^{(q)}(y)
    $

    \State Evaluate residual vector
    $
    R(c^{(k)})
    $

    \State Assemble Jacobian matrix
    $
    J(c^{(k)})
    =
    \frac{\partial R(c^{(k)})}{\partial c}
    $

    \State Solve
    $
    LU\, y_R = R
    $

    \State Compute weighted Jacobian
    $
    LU\, Y_J = J
    $
    where each column of $Y_J$ is obtained from
    $
    LU\, y_j = J_j
    $
    with $J_j$ denoting the $j$-th column of $J$

    \State Assemble Gauss--Newton Hessian approximation
    $
    H = J^T Y_J
    $

    \State Assemble right-hand side
    $
    g = J^T y_R
    $

    \State Solve the linearized system
    $
    H\delta^{(k)}
    =
    -g
    $

    \State Update spline coefficients
    $
    c^{(k+1)}
    =
    c^{(k)}
    +
    \delta^{(k)}
    $

\EndWhile


\end{algorithmic}
\end{algorithm}

\section{Numerical Experiments}

In this section, we compare the PINN, CRVPINN, ODIL, and IGA-ODIL methods on a number of benchmarks.

\subsection{Poisson Problem}

We first consider the Poisson equation
\begin{equation}
-\Delta u=f
\end{equation}
with manufactured solution
\begin{equation}
 u(x,y)=\sin(2\pi x)\sin(2\pi y).
\end{equation}

The corresponding forcing term is
\begin{equation}
f(x,y)=8\pi^2\sin(2\pi x)\sin(2\pi y).
\end{equation}

Figure~\ref{fig:pinnloss} illustrates the discrepancy between the classical PINN residual loss and the true $H^1$ error.

\begin{figure}[ht]
\centering
\includegraphics[width=0.65\textwidth]{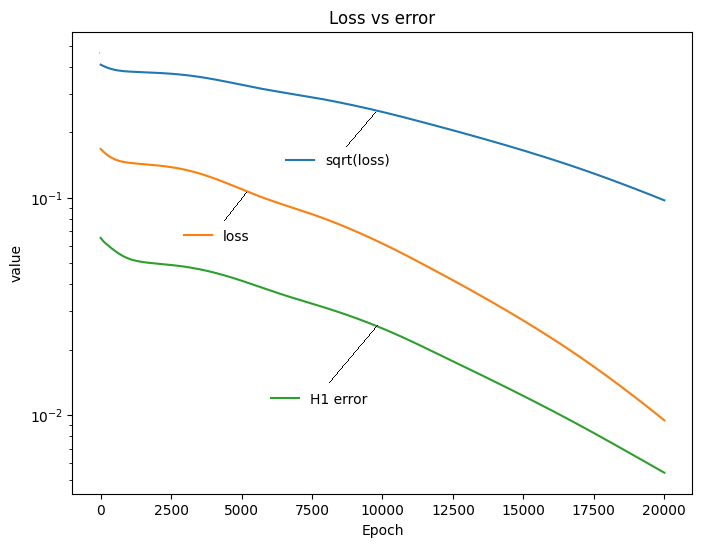}
\caption{PINN residual loss versus true error for the Poisson benchmark.}
\label{fig:pinnloss}
\end{figure}

The robust CRVPINN formulation using the robust loss and discrete weak formulations substantially improves convergence robustness, see Figure \ref{fig:crvpinn}.
The training time using the ADAM algorithm with $100\times 100$ collocation points and 2 fully connected neural network layers is 6 min and 51 sec on Google Compute Engine GPU.
Replacing the ADAM method with the Gauss-Newton formulation with Neural Network parameterization results in lack of convergence

{\tt Residual size: 1600}

{\tt Parameter size: 1801}

{\tt Epoch    0 | Loss = 1.481357e+03 | time = 22.62s}

{\tt Residual size: 1600}

{\tt Parameter size: 1801}

{\tt \_LinAlgError: torch.linalg.solve: The solver failed because the input matrix is singular.}

\begin{figure}[ht]
\centering
\includegraphics[width=0.48\textwidth]{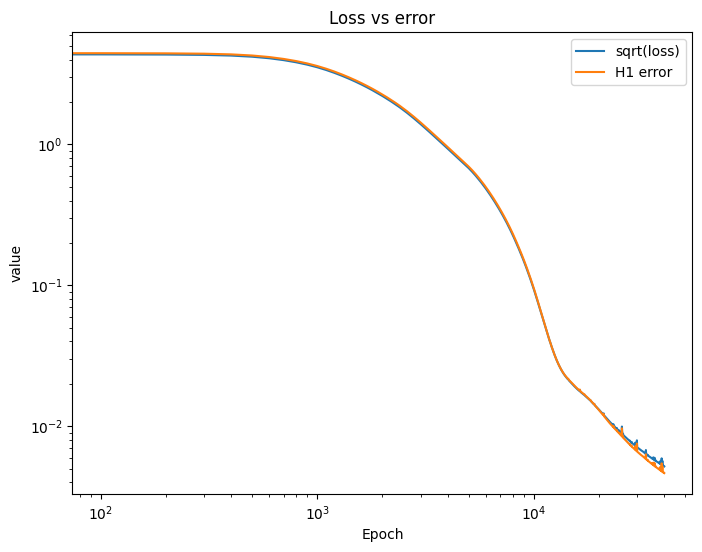}
\includegraphics[width=0.48\textwidth]{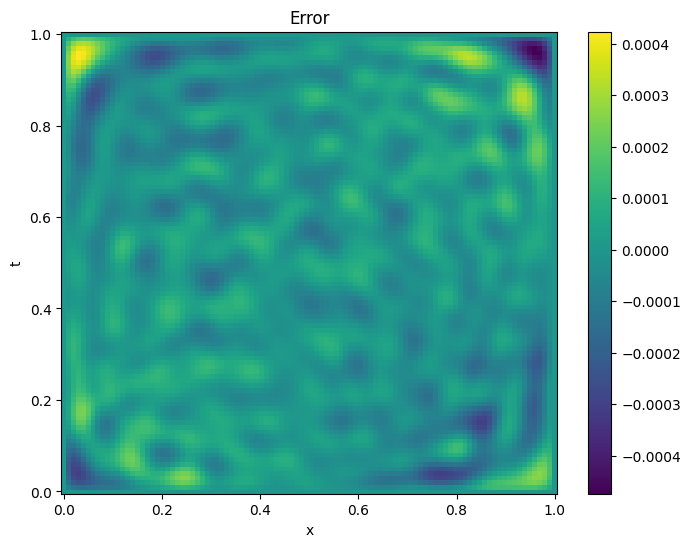}
\caption{Robust CRVPINN convergence and error distribution.}
\label{fig:crvpinn}
\end{figure}

We first switch to the ODIL method, where 
the neural network is replaced by a vector of coefficients $u_\theta = \{ u_{i,j} \}$ and
\begin{eqnarray}
R(u_\theta)=\sum_{x_{i,j}}\left(\frac{\partial u_{i,j}}{\partial x}+\frac{\partial u_{i,j}}{\partial y}+f_1(x_{i,j})\right)^2.  \end{eqnarray}
Running ADAM optimizer with ODIL discretization using $100\times 100$ collocation points takes 60,000 iterations, see Figure \ref{fig:odil_conv}. It requires 90 sec of training in Google Compute Engine GPU, see Figure \ref{fig:odil_sol}.
Solving the ODIL method with Gauss-Newton based on a robust residual iterative solver takes 14.73 sec on Google Compute Engine GPU (28 times faster than CRVPINN). The ODIL loss is not robust and it is not related with the true error. The obtain $L^2$ norm accuracy is 0.00046.
\begin{figure}
\includegraphics[width=0.7\textwidth]{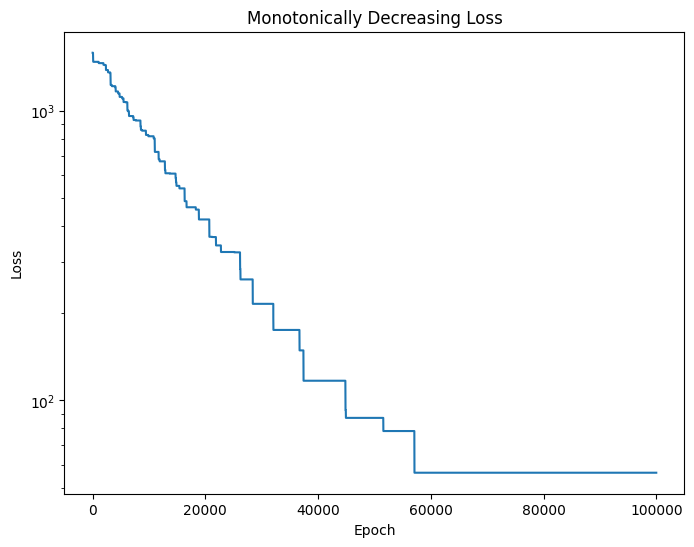}
\caption{Convergence of ODIL training with ADAM method}
\label{fig:odil_conv}
\end{figure}
\begin{figure}
\includegraphics[width=0.49\textwidth]{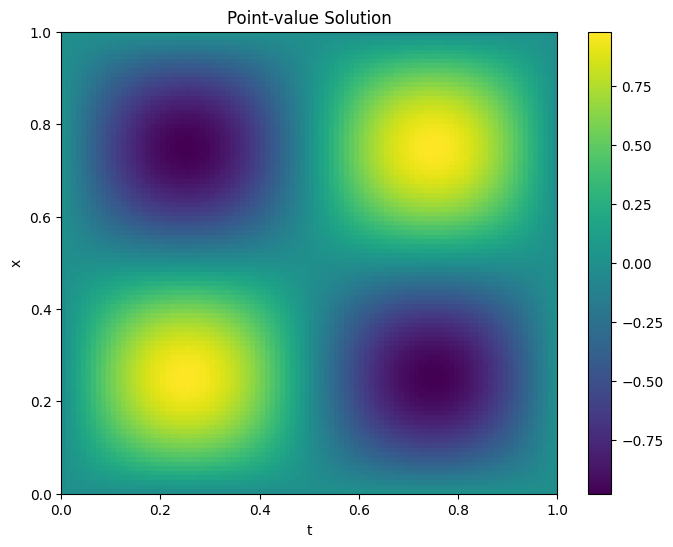}\includegraphics[width=0.49\textwidth]{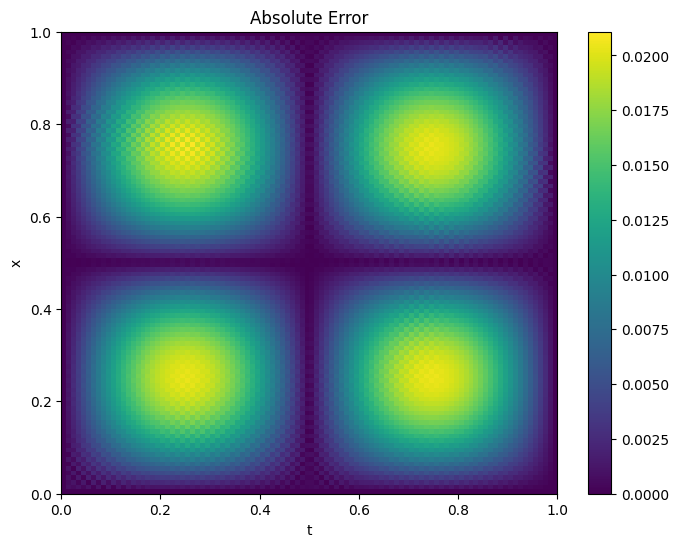} \caption{Solution obtained with ODIL method}
\label{fig:odil_sol}
\end{figure}

Finally, we employ the IGA-ODIL approximation. IGA-ODIL replaces neural-network parameterizations with spline coefficients.
The IGA-ODIL system is solved using the Gauss-Newton method. We employ MATLAB implementation, see Figure \ref{fig:igaodil}.
\begin{figure}
\includegraphics[width=\textwidth]{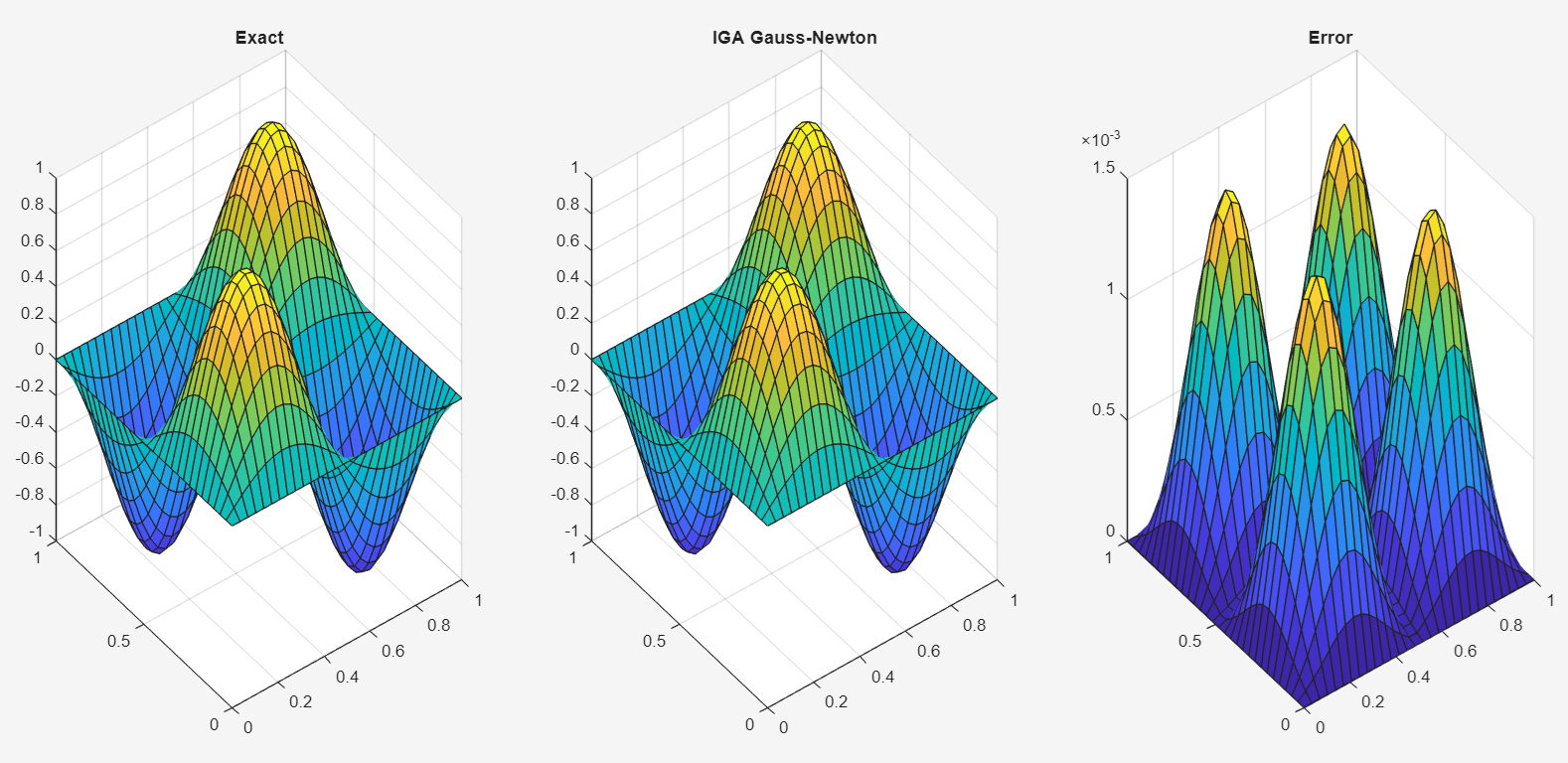}
\caption{IGA-ODIL solution and error for the Poisson problem.}
\label{fig:igaodil}
\end{figure}
We use 30$\times$ 30 elements, cubic B-splines, and 30$\times$30 collocation points. 
The resulting $L^2$ norm error is  6.468394e-04.
The Gauss-Newton method takes 1 iteration and requires 0.04854 sec on MATLAB with a single core.
More detailed evaluation of IGA-ODIL for different number of elements $n_x=n_y$, collocation points $N_c$ and order $p$ of B-splines which are $C^{p-1}$ continous is presented in Table 1 and Figure \ref{fig:num1}. In general, the accuracy increases with order of B-splines, provided there are enough collocation points. This is consistent with Theorem 1 presented in Appendix B.
The IGA-ODIL is three orders of magnitude faster than CRVPINN.

\begin{table}
\centering
\caption{Results for the 2D Poisson problem solved with sparse IGA--ODIL.}
\label{tab:poisson_iga_odil}
\begin{tabular}{ccccc}
\hline
$p$ & $n_x=n_y$ & $N_c$ & Solver time [s] & $L^2$ error \\
\hline
1 & 10 & 30 & $4.618\times 10^{-4}$ & $5.000\times 10^{-1}$ \\
1 & 20 & 30 & $3.922\times 10^{-4}$ & $5.000\times 10^{-1}$ \\
1 & 30 & 30 & $3.450\times 10^{-4}$ & -- \\
2 & 10 & 30 & $5.532\times 10^{-4}$ & $5.187\times 10^{-3}$ \\
2 & 20 & 30 & $3.001\times 10^{-3}$ & $2.700\times 10^{-3}$ \\
2 & 30 & 30 & $3.266\times 10^{-2}$ & $9.704\times 10^{-3}$ \\
3 & 10 & 30 & $8.202\times 10^{-4}$ & $6.324\times 10^{-4}$ \\
3 & 20 & 30 & $3.166\times 10^{-2}$ & $1.162\times 10^{-4}$ \\
3 & 30 & 30 & $4.854\times 10^{-3}$ & $6.468\times 10^{-4}$ \\
\hline
\end{tabular}
\end{table}

\begin{figure}
\includegraphics[width=0.49\textwidth]{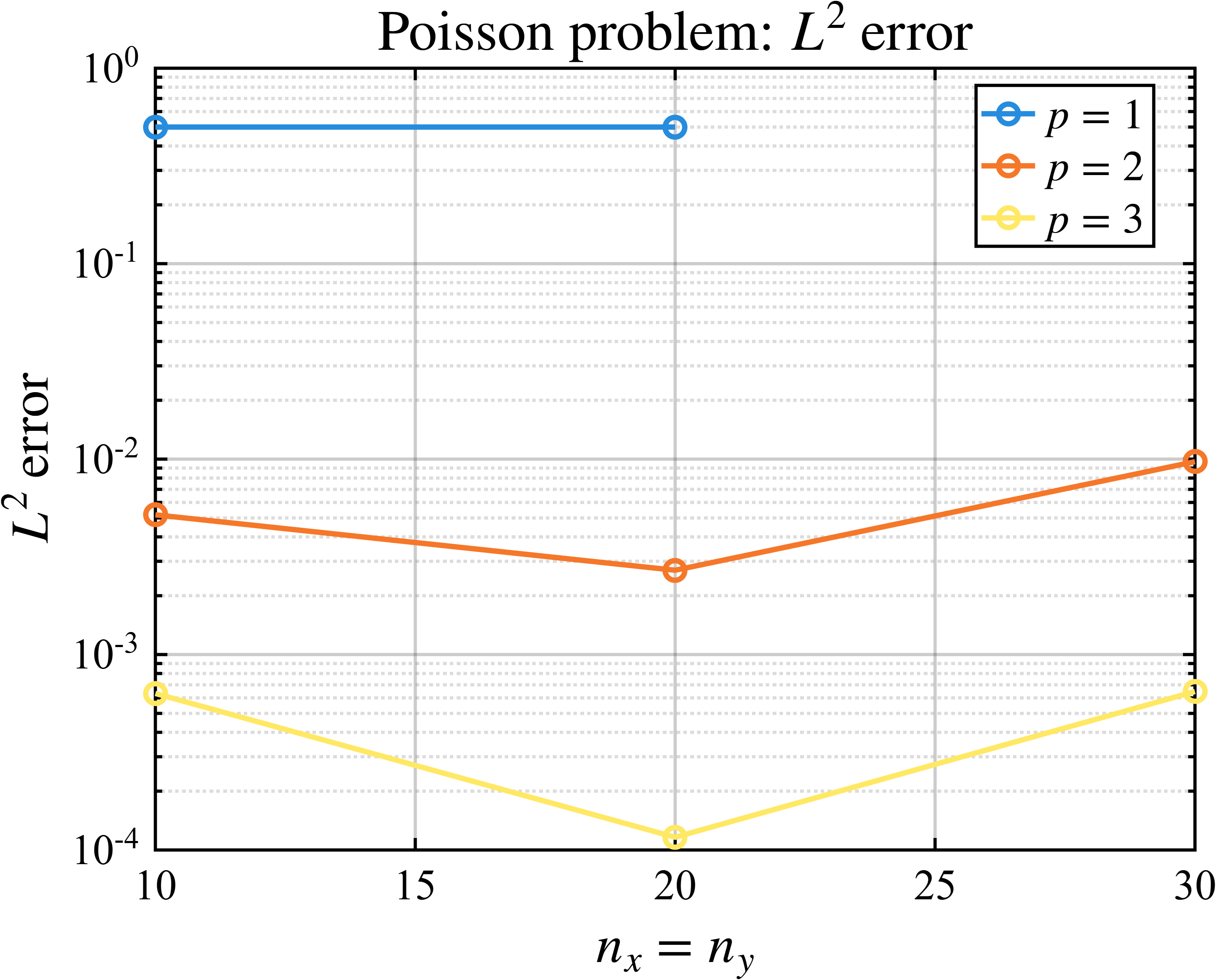}\includegraphics[width=0.49\textwidth]{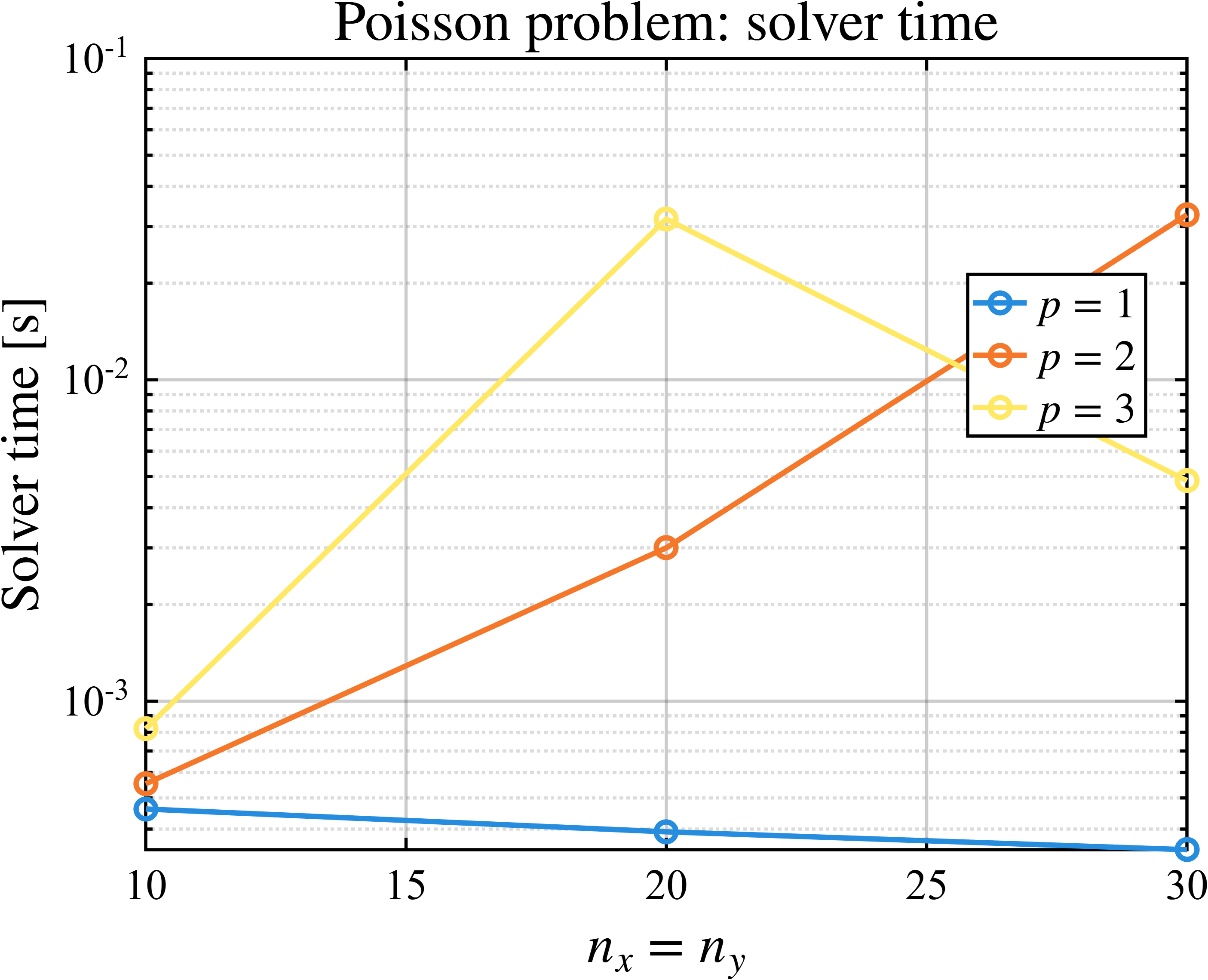}
\caption{Accuracy and timing of IGA-ODIL solution for the Poisson problem for B-splines of order $p=1,2,3$.}
\label{fig:num1}
\end{figure}

\subsection{Eriksson--Johnson Problem}

To assess performance for convection-dominated regimes, we consider the Eriksson–Johnson advection–diffusion problem:
\begin{equation}
-\varepsilon \Delta u + \boldsymbol{\beta} \cdot \nabla u = 0, \quad (x,y) \in \Omega,
\end{equation}
where $\varepsilon > 0$ is the diffusion coefficient and $\boldsymbol{\beta} = (1,0)^T$ is the advection field.
Boundary conditions are prescribed as:
\begin{align}
u(0,y) &= \sin(\pi y), && \text{(inflow)}, \\
u(x,y) &= 0, && \text{on remaining boundaries}.
\end{align}

For small values of $\varepsilon$, the solution exhibits boundary layers near the outflow boundary $x=1$, making this a challenging benchmark for numerical methods.
The problem exhibits the exact solution
\begin{eqnarray}
\label{eq:exact3}
u_{exact}(x, y) = \frac{(e^{(r_1 (x-1))} - e^{(r_2 (x-1))})}{ (e^{(-r_1)} - e^{(-r_2)})}  \sin(\pi  y),  \notag \\ r_1 = \frac{(1 + \sqrt{(1 + 4\epsilon^2\pi^2)})}{ (2\epsilon)}, r_2 = \frac{(1 - \sqrt{(1 + 4\epsilon^2\pi^2)})}{ (2\epsilon)}.
\end{eqnarray}

Applying PINN for the Eriksson-Johnson problem for $\epsilon=0.1$ results in a solution presented in Figure \ref{fig:EJ}. 
The PINN loss is not robust.
Upgrading to the robust loss as proposed by CRVPINN method, using 100$\times 100$ collocation points, two layers with 100 neurons, results in a solution presented in Figure \ref{fig:EJ2}.
Total solution time on Google Compute Engine GPU is 3 min 31 sec.

\begin{figure}
\includegraphics[width=0.45\textwidth]{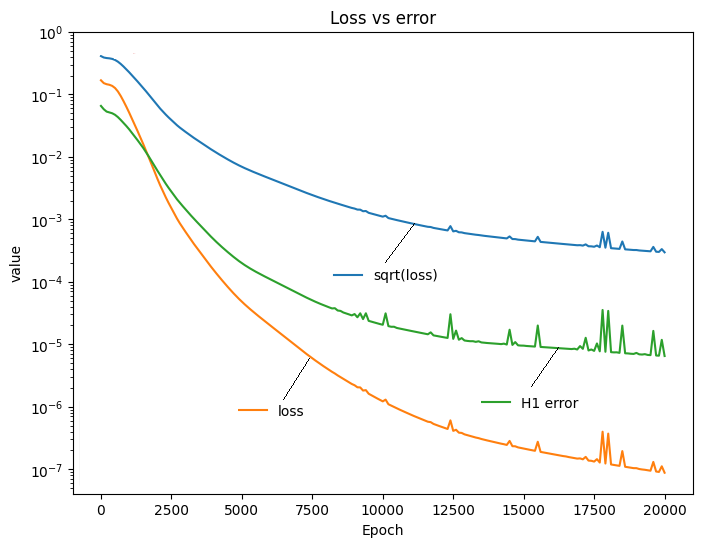}\includegraphics[width=0.45\textwidth]{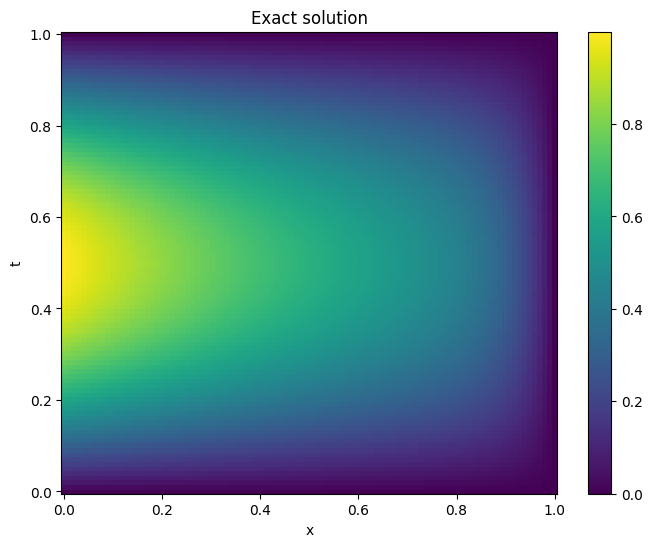}
    \caption{Advection-diffusion problem $\left\{\tiny
 \begin{array}{rl}
 \beta \cdot \nabla u - \epsilon \Delta u = 0 & \hbox{in }\Omega, \\
 u=g & \hbox{over }\partial\Omega\,.
 \end{array}\right.$}
    \label{fig:EJ}
\end{figure}

\begin{figure}
    \includegraphics[width=0.4\textwidth]{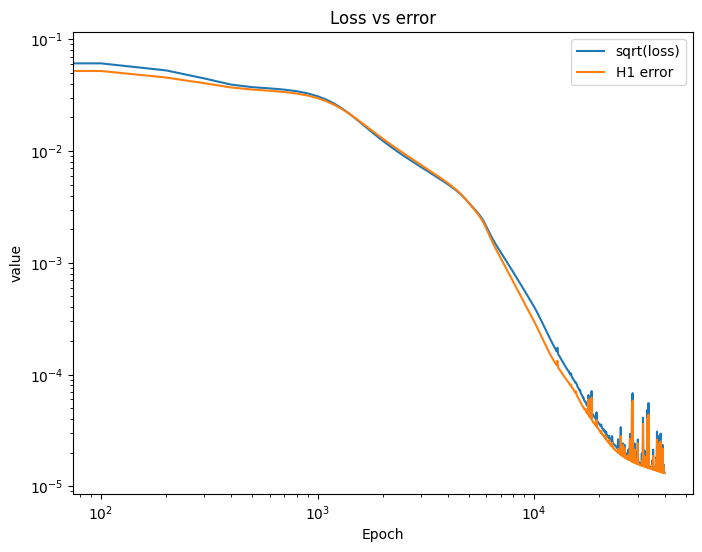}\includegraphics[width=0.4\textwidth]{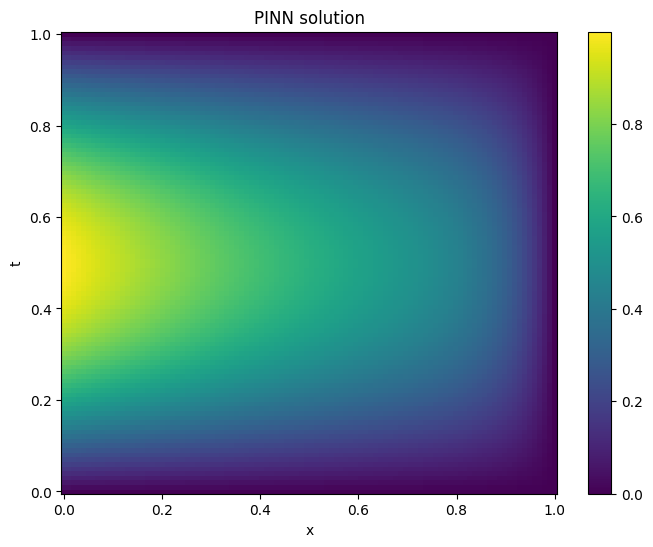}
\caption{\small Advection-diffusion problem {\tiny $\left\{
 \begin{array}{rl}
 \beta \cdot \nabla u - \epsilon \Delta u = 0 & \hbox{in }\Omega, \\
 u=g & \hbox{over }\partial\Omega\,.
 \end{array}\right.$}}
    \label{fig:EJ2}
\end{figure}


Detailed evaluation of IGA-ODIL for $\epsilon=0.001$, for different number of elements $n_x=n_y$, collocation points $N_c$ and order $p$ of B-splines which are $C^{p-1}$ continous is presented in Table 2 and Figure \ref{fig:num2}. In general, the accuracy increases with order of B-splines, provided there are enough collocation points. This is consistent with theoretical estimate of Theorem 1 from Appendix B.
The IGA-ODIL is two orders of magnitude faster than CRVPINN.
\begin{table}[ht]
\centering
\caption{Results for the Eriksson--Johnson problem with $\varepsilon=0.001$.}
\label{tab:ej_eps_0p001}
\begin{tabular}{ccccc}
\hline
$p$ & $n_x=n_y$ & $N_c$ & Solver time [s] & $L^2$ error \\
\hline
1 & 100 & 500 & $1.267\times 10^{-1}$ & $4.032\times 10^{-1}$ \\
1 & 200 & 500 & $3.534\times 10^{-1}$ & $4.031\times 10^{-1}$ \\
1 & 300 & 500 & $8.727\times 10^{-1}$ & $4.031\times 10^{-1}$ \\
2 & 100 & 500 & $3.364\times 10^{-1}$ & $4.004\times 10^{-1}$ \\
2 & 200 & 500 & $7.384\times 10^{-1}$ & $3.978\times 10^{-1}$ \\
2 & 300 & 500 & $1.936\times 10^{0}$ & $2.922\times 10^{-1}$ \\
3 & 100 & 500 & $5.351\times 10^{-1}$ & $3.973\times 10^{-1}$ \\
3 & 200 & 500 & $1.396\times 10^{0}$ & $3.651\times 10^{-1}$ \\
3 & 300 & 500 & $2.993\times 10^{0}$ & $1.844\times 10^{-2}$ \\
\hline
\end{tabular}
\end{table}

\begin{figure}
\includegraphics[width=0.49\textwidth]{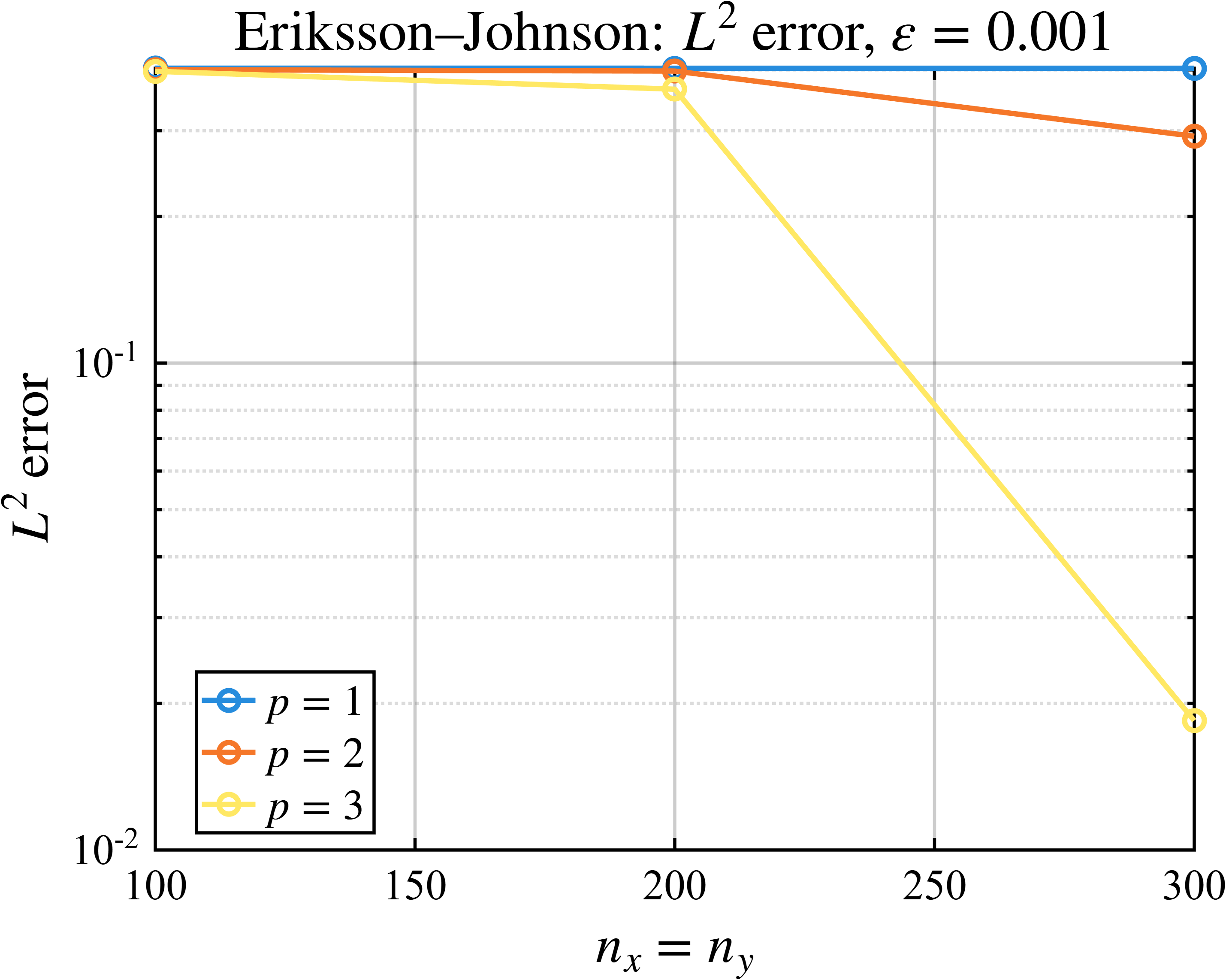}\includegraphics[width=0.49\textwidth]{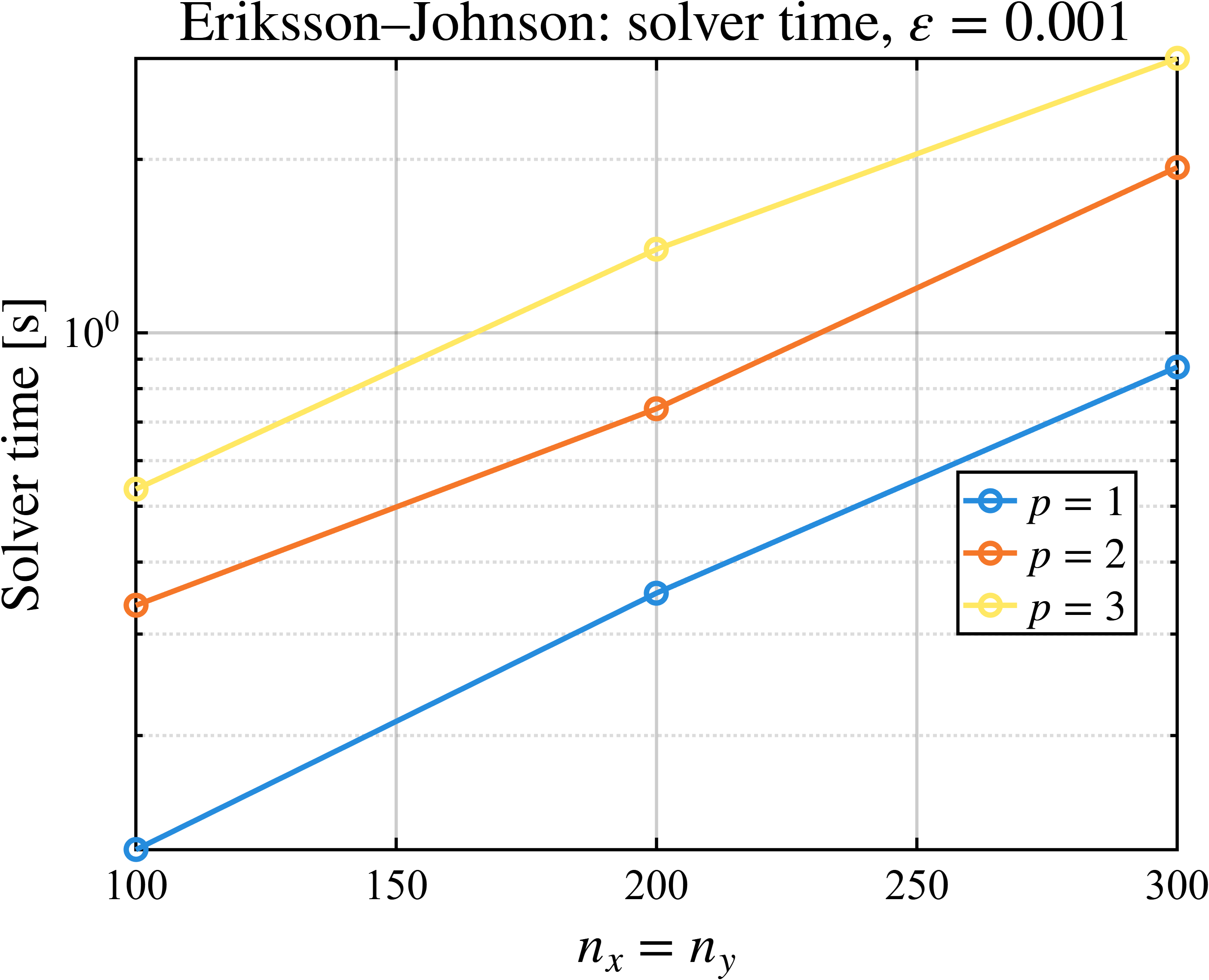}
\caption{Accuracy and timing of IGA-ODIL solution for the Eriksson-Johnson problem for $\epsilon=0.001$ for B-splines of order $p=1,2,3$.}
\label{fig:num2}
\end{figure}

\subsection{Helmholtz Problem}

We consider the Helmholtz problem:
\begin{equation}
-\Delta u + \alpha u = f, \quad (x,y) \in \Omega,
\end{equation}
with homogeneous Dirichlet boundary conditions
\begin{equation}
u(x,y) = 0, \quad (x,y) \in \partial \Omega.
\end{equation}

A manufactured solution is chosen as
\begin{equation}
u(x,y) = \sin(\kappa\pi x)\sin(\kappa\pi y),
\end{equation}
where $\kappa \in \mathbb{N}$ controls the frequency. The corresponding forcing term is
\begin{equation}
f(x,y) = \left( 2(\kappa\pi)^2 + \alpha \right) \sin(\kappa\pi x)\sin(\kappa\pi y).
\end{equation}

This problem is particularly challenging for large values of $\kappa$ due to oscillatory behavior and the well-known pollution effect.
We set up the problem with $\kappa=40$. PINN, CRVPINN, and ODIL methods fail to solve this difficult Helmholtz problem for $\kappa=40$.
IGA-ODIL successfully resolves oscillatory solutions with sparse Gauss-Newton optimization.
We employ $80\times 80$ elements, cubic B-splines, and $160\times 160$ collocation points. The resulting accuracy in $L^2$ norm is 1.467537e-02.  The solution takes 0.15 sec on a MATLAB single core CPU.

\begin{figure}[ht]
\centering
\includegraphics[width=0.8\textwidth]{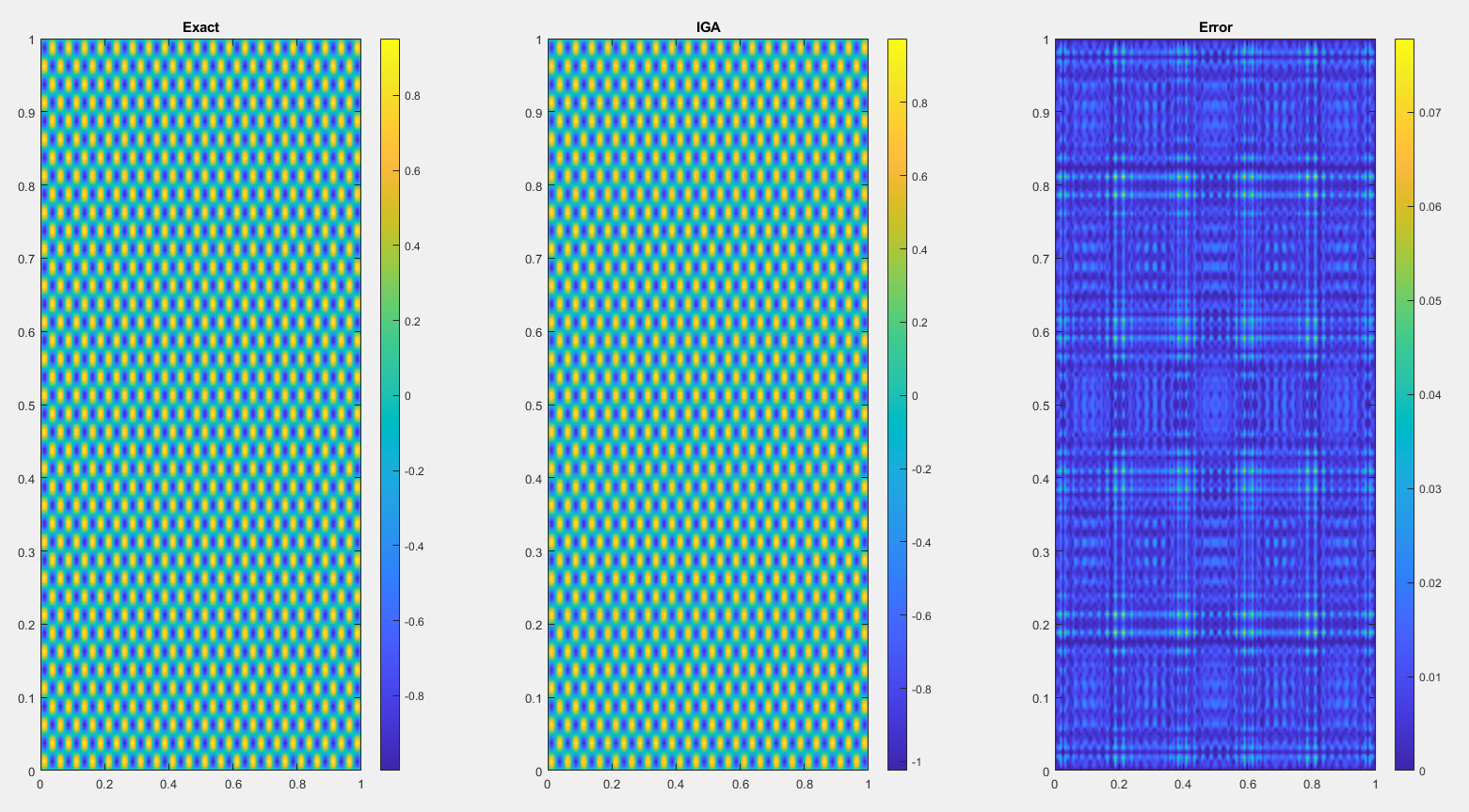}
\caption{IGA-ODIL solution for the Helmholtz problem with $\kappa=40$.}
\end{figure}

\subsection{Three-Dimensional Helmholtz Problem}

We further consider a three-dimensional Helmholtz benchmark, see Figure \ref{figh3d}.
We solve the problem for $\kappa=10$ using $20 \times 20\times 20$ elements, cubic B-splines, $40\times 40 \times 40$ collocation points, $\kappa=10$, and the Gauss-Newton method.
The resulting accuracy in $L^2$ norm is 1.902649e-02. The problem takes 7.82 sec on a MATLAB single core CPU.

\begin{figure}[ht]
\centering
\includegraphics[width=0.65\textwidth]{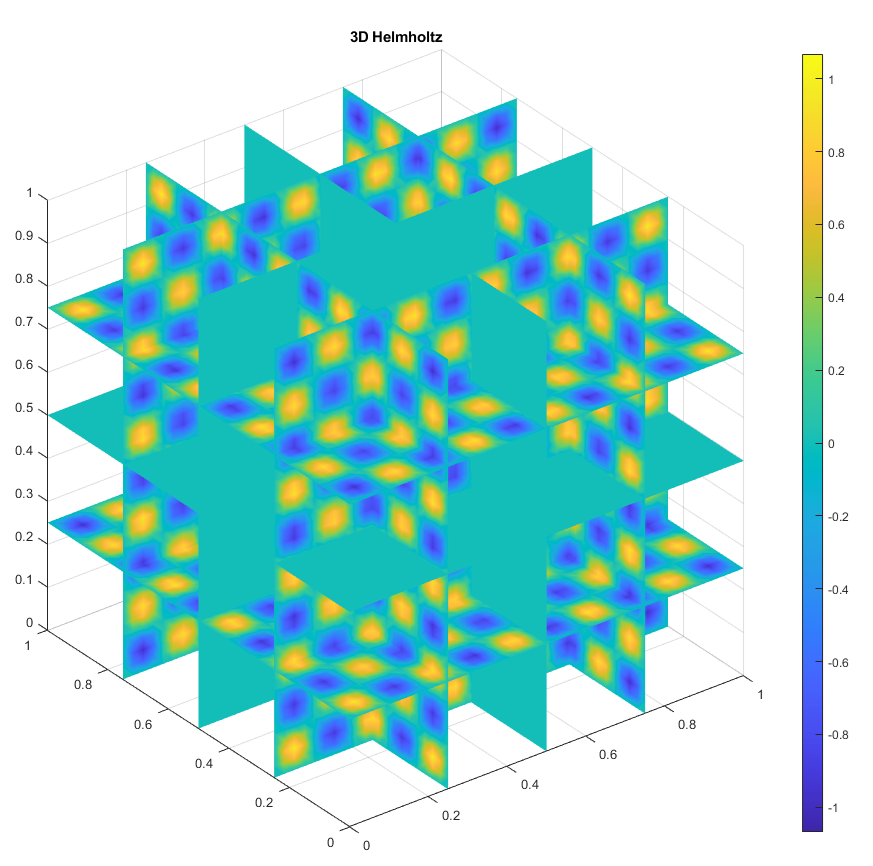}
\caption{Three-dimensional Helmholtz solution obtained with IGA-ODIL.}
\label{figh3d}
\end{figure}

\subsection{Allen--Cahn Equation}

We consider the steady Allen--Cahn equation:
\begin{equation}
-\varepsilon^2 \Delta u + (u^3 - u) = f, \quad (x,y) \in \Omega,
\end{equation}
with Dirichlet boundary conditions
\begin{equation}
u(x,y) = g(x,y), \quad (x,y) \in \partial \Omega.
\end{equation}

Using a manufactured solution
\begin{equation}
u(x,y) = \tanh\left(\frac{x+y-1}{\sqrt{2}\epsilon}\right),
\end{equation}
the forcing term is given by
\begin{equation}
f(x,y) = \tanh\left(\frac{x+y-1}{\sqrt{2}\epsilon}\right) - \tanh^3\left(\frac{x+y-1}{\sqrt{2}\epsilon}\right).
\end{equation}

The Allen--Cahn equation represents a nonlinear reaction--diffusion system exhibiting bistability and sharp interface layers for small values of $\varepsilon$, making it a suitable benchmark for nonlinear solvers.
Detailed evaluation of IGA-ODIL for Allen-Cahn problem with $\epsilon=0.01$, for different number of elements $n_x=n_y$, collocation points $N_c$ and order $p$ of B-splines which are $C^{p-1}$ continous is presented in Table 3 and Figure \ref{fig:num3}. In general, the accuracy increases with number of elements, provided there are enough collocation points. 

\begin{figure}[ht]
\centering
\includegraphics[width=0.9\textwidth]{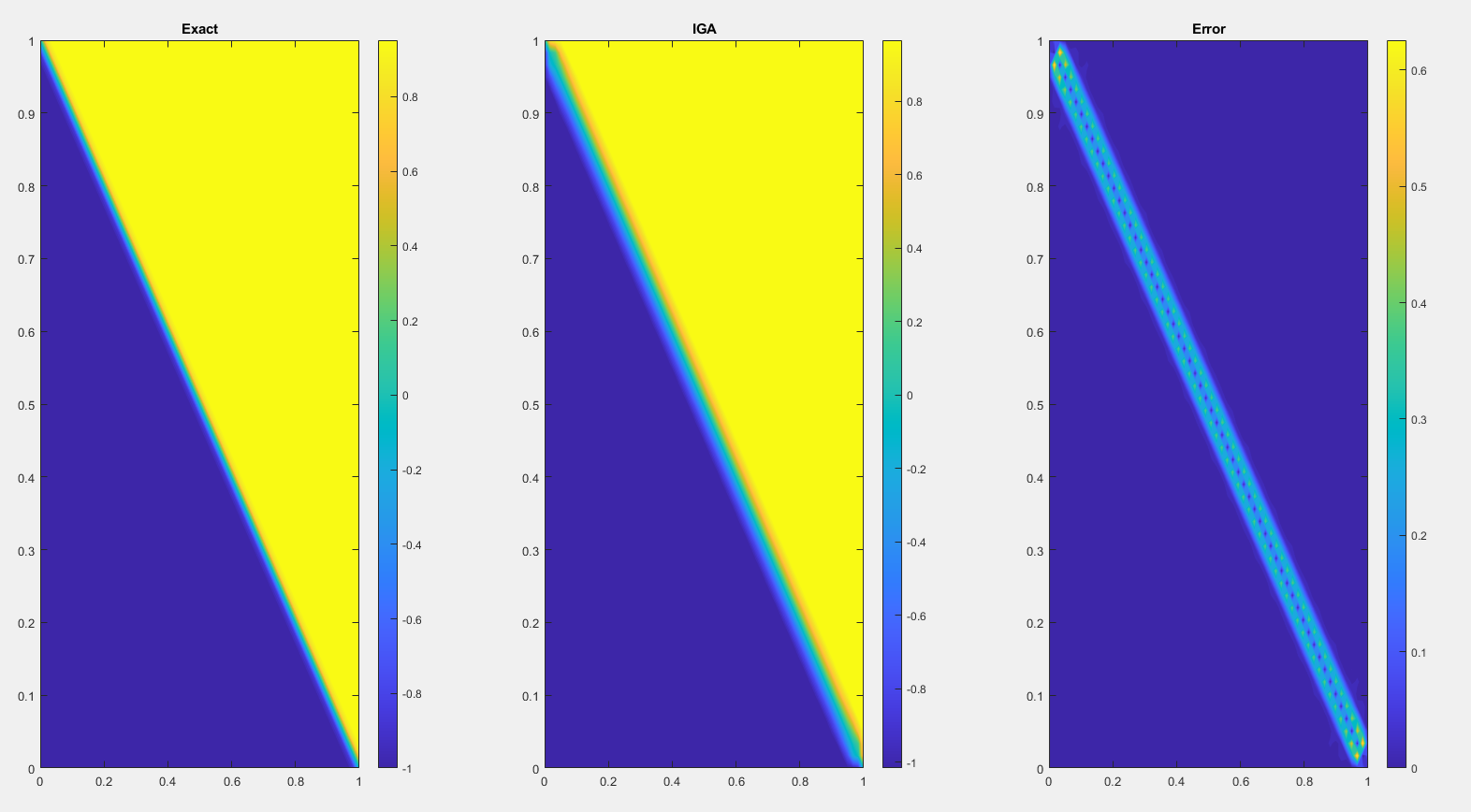}
\caption{IGA-ODIL solution for the Allen--Cahn equation.}
\end{figure}

\begin{table}[ht]
\centering
\caption{Results for the Allen--Cahn problem solved with sparse IGA--ODIL/Newton iterations.}
\label{tab:allencahn_iga_odil}
\begin{tabular}{ccccc}
\hline
$p$ & $n_x=n_y$ & $N_c$ & Time [s] & $L^2$ error \\
\hline
1 & 10 & 60 & $5.139\times 10^{-2}$ & $2.155\times 10^{-1}$ \\
1 & 20 & 60 & $8.132\times 10^{-2}$ & $1.085\times 10^{-1}$\\
1 & 30 & 60 & $1.093\times 10^{-1}$ & $6.421\times 10^{-2}$ \\
1 & 40 & 60 & $1.799\times 10^{-1}$ & $4.750\times 10^{-2}$ \\
2 & 10 & 60 & $4.190\times 10^{-2}$ & $2.076\times 10^{-1}$ \\
2 & 20 & 60 & $9.090\times 10^{-2}$ & $1.069\times 10^{-1}$ \\
2 & 30 & 60 & $1.931\times 10^{-1}$ & $7.879\times 10^{-2}$ \\
2 & 40 & 60 & $3.123\times 10^{-1}$ & $8.191\times 10^{-2}$ \\
3 & 10 & 60 & $6.398\times 10^{-2}$ & $2.071\times 10^{-1}$ \\
3 & 20 & 60 & $1.330\times 10^{-1}$ & $1.046\times 10^{-1}$\\
3 & 30 & 60 & $2.403\times 10^{-1}$ & $7.824\times 10^{-2}$ \\
3 & 40 & 60 & $3.375\times 10^{-1}$ & $8.594\times 10^{-2}$ \\
\hline
\end{tabular}
\end{table}

\begin{figure}
\includegraphics[width=0.49\textwidth]{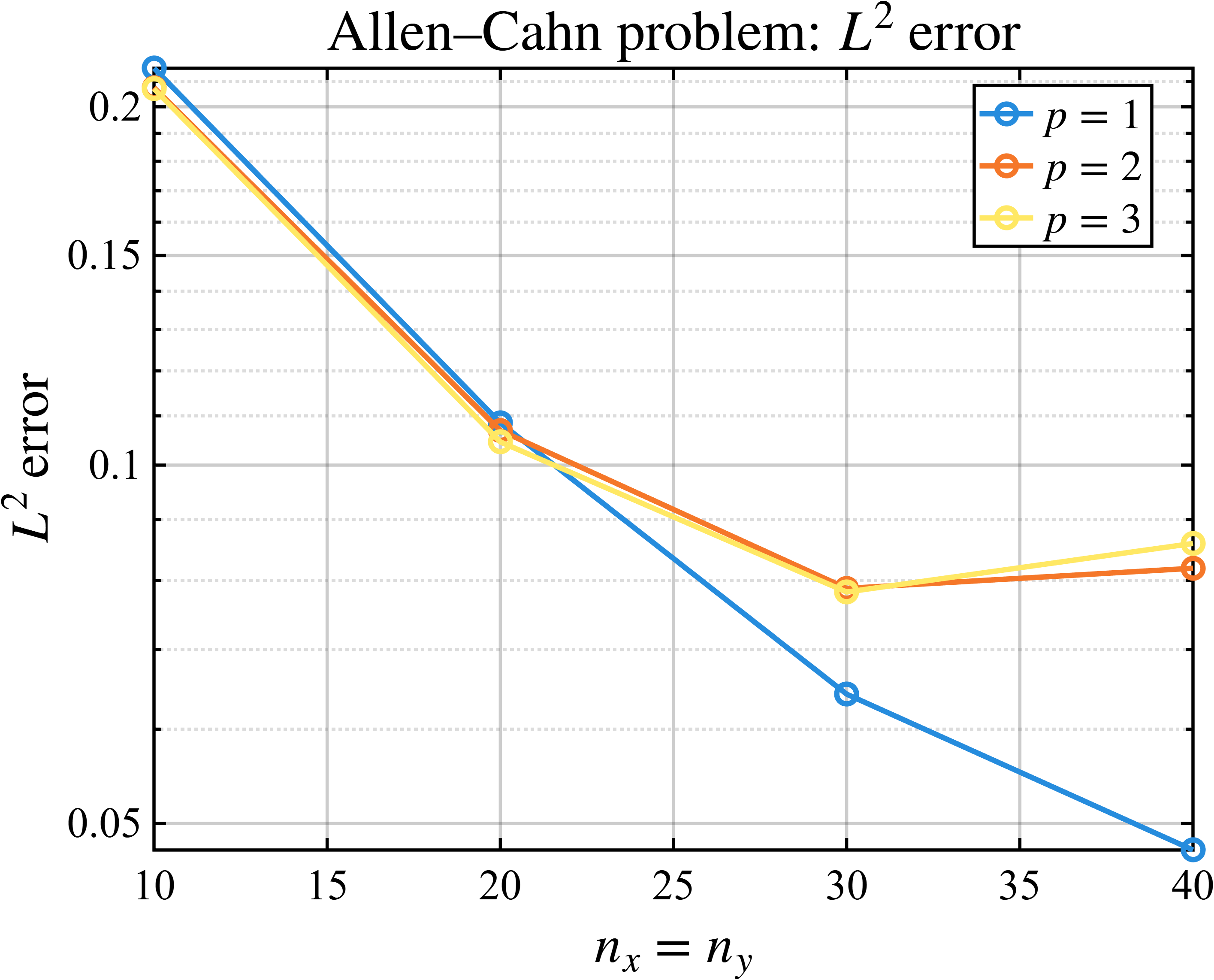}\includegraphics[width=0.49\textwidth]{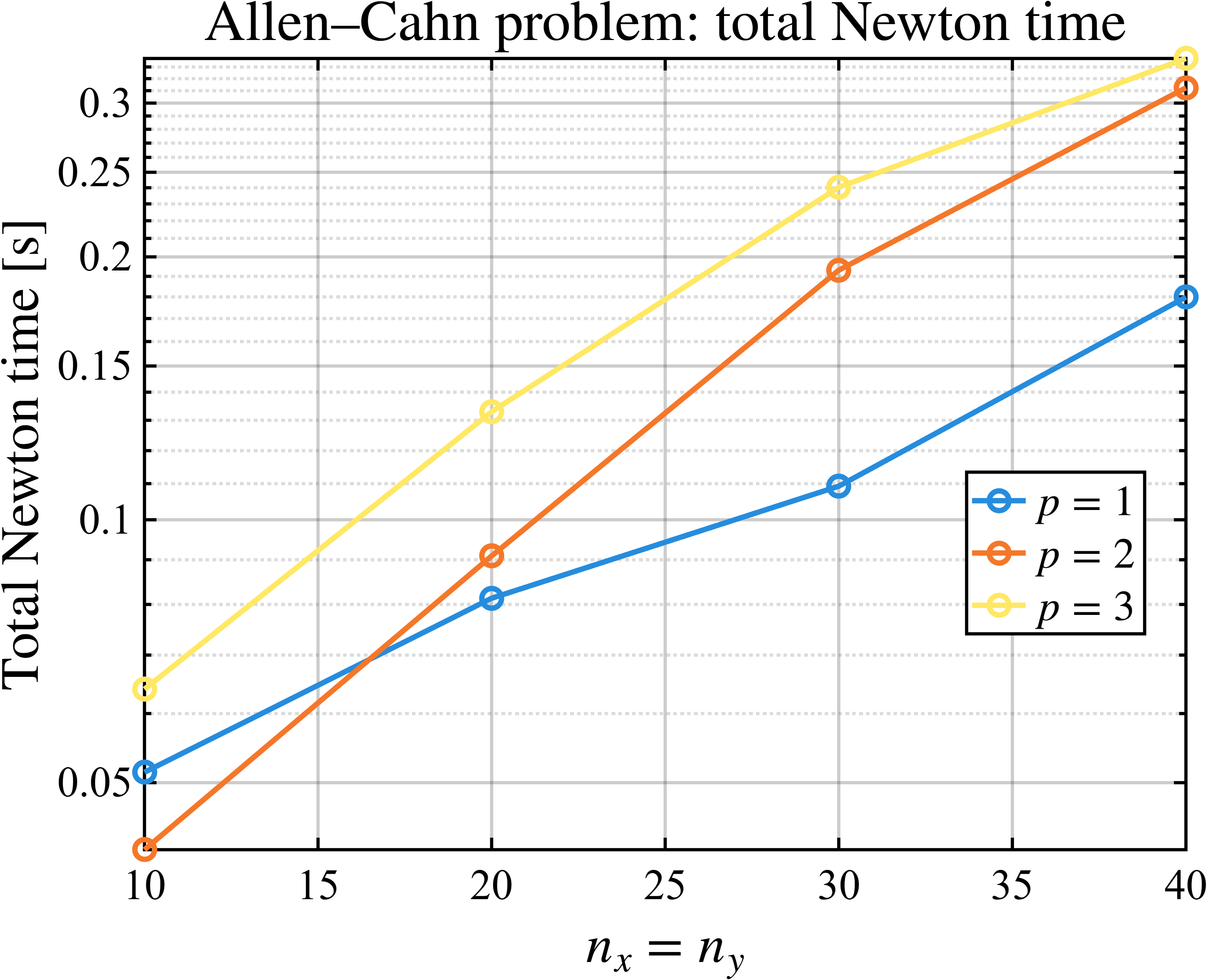}
\caption{Accuracy and timing of IGA-ODIL solution for the Allen-Cahn problem for $\epsilon=0.01$ for B-splines of order $p=1,2,3$.}
\label{fig:num3}
\end{figure}

\subsection{Eriksson--Johnson Problem on a Circular Domain}

We consider the convection--diffusion equation posed on the unit disk
$
\Omega = \{ (x,y)\in\mathbb{R}^2 : x^2+y^2 \le 1 \},
$
given by
\begin{equation}
-\varepsilon \Delta u + \boldsymbol{\beta}\cdot\nabla u = 0
\quad \text{in } \Omega,
\end{equation}
where $\varepsilon > 0$ denotes the diffusion coefficient and
$
\boldsymbol{\beta} = (1,0)^T
$
is the convection direction.
The computational domain is parameterized by a tensor-product spline space on the reference square
$
(\xi,\eta)\in [0,1]^2.
$
To obtain a circular physical domain, we employ a smooth square-to-disk transformation. We first define
\begin{align}
a = 2\xi - 1, \;
b = 2\eta - 1,
\end{align}
and then we introduce the mapping
\begin{align}
x = a \sqrt{1-\frac{b^2}{2}}, \;
y = b \sqrt{1-\frac{a^2}{2}}.
\end{align}

This mapping transforms the unit square into the unit disk while avoiding the singularity associated with polar coordinates at the origin. In particular the mapping is smooth, and no coordinate singularity appears at the center. Additionally, tensor-product spline discretizations can still be used, and the collocation points remain structured in parameter space.
The PDE is solved in the parametric domain, while the numerical solution is visualized on the physical circular geometry.
The numerical solution is represented as
\begin{equation}
u_h(\xi,\eta)
=
\sum_{i=1}^{n_x}
\sum_{j=1}^{n_y}
c_{j,i}
B_i(\xi) B_j(\eta),
\end{equation}
where $B_i$ and $B_j$ are tensor-product B-spline basis functions.
In the present implementation, the strong-form residual is evaluated in parameter space:
\begin{equation}
R(u_h)
=
-\varepsilon
\left(
\frac{\partial^2 u_h}{\partial \xi^2}
+
\frac{\partial^2 u_h}{\partial \eta^2}
\right)
+
\frac{\partial u_h}{\partial \xi}.
\end{equation}

The spline coefficients are obtained by minimizing the least-squares functional
\begin{equation}
\mathcal{J}(c)
=
\frac12
\sum_{k=1}^{N_c}
|R(u_h)(\xi_k,\eta_k)|^2,
\end{equation}
evaluated at collocation points
$
\{(\xi_k,\eta_k)\}_{k=1}^{N_c}.
$
Dirichlet boundary conditions are imposed strongly through elimination of boundary spline coefficients. The inflow boundary condition is prescribed on the left side of the parametric domain:
\begin{equation}
u(0,\eta)=\sin(\pi\eta),
\end{equation}
while homogeneous conditions are imposed on the remaining boundaries.
We use $40\times 40$ elements and cubic B-splines with $200\times 200$ collocation points for $\epsilon=0.1$. The problem takes 0.15 sec on single core MATLAB with Gauss-Newton method. 
The solution is presented in Figure \ref{fig:circ}.
This formulation demonstrates how residual minimization in spline spaces can naturally incorporate nontrivial geometric mappings while preserving the tensor-product structure of the discretization.
\begin{figure}
\includegraphics[width=\textwidth]{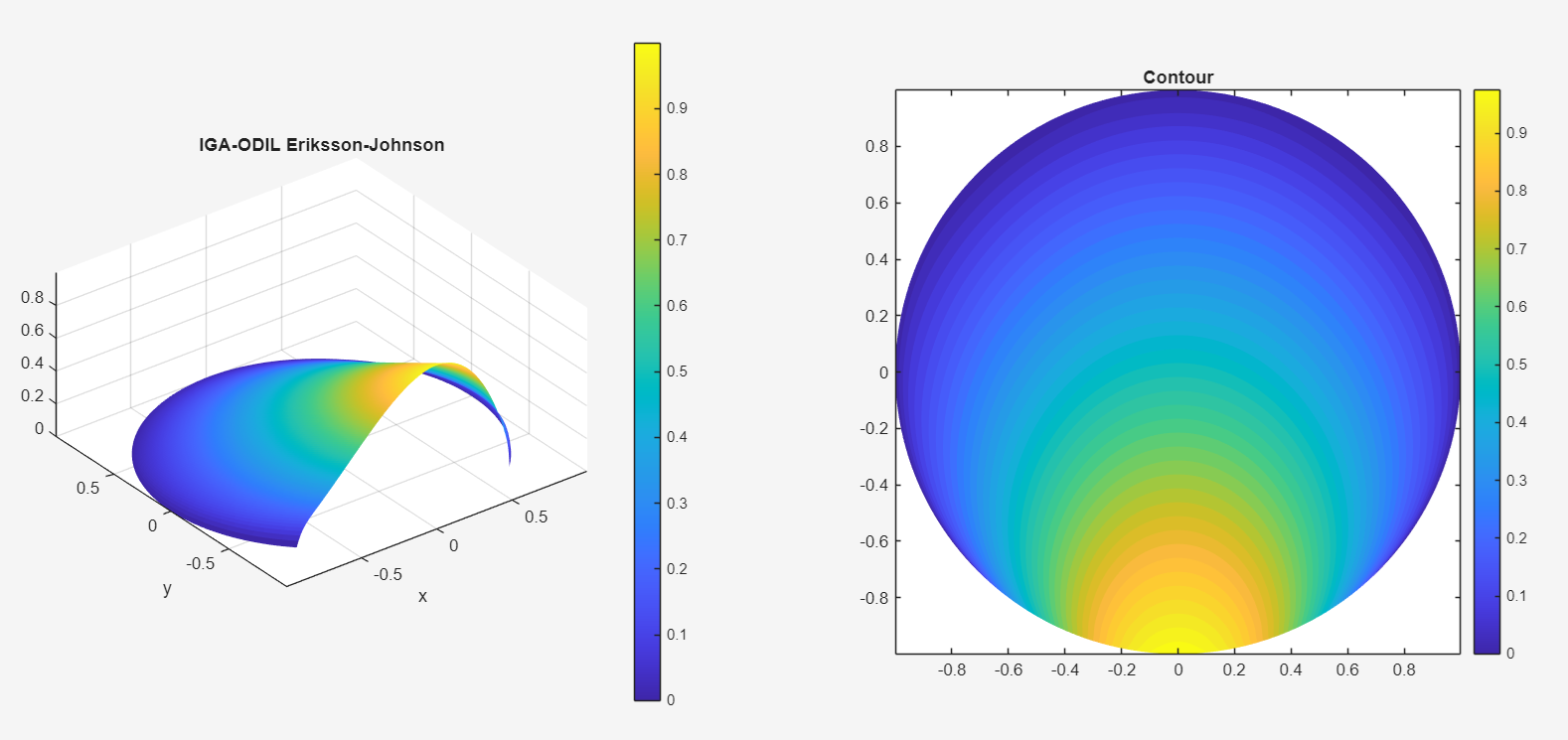}
\caption{Eriksson-Johnson problem solved in a circular domain}
\label{fig:circ}
\end{figure}

\subsection{Helmholtz Equation on a Ball}

We next consider the three-dimensional Helmholtz equation posed on the unit ball
$
\Omega = \{ (x,y,z)\in\mathbb{R}^3 : x^2+y^2+z^2 \le 1 \}.
$
The governing equation reads
\begin{equation}
-\Delta u + \kappa^2 u = f
\qquad \text{in } \Omega,
\end{equation}
subject to homogeneous Dirichlet boundary conditions
\begin{equation}
u = 0
\qquad \text{on } \partial\Omega.
\end{equation}

The geometry of the ball is represented through a smooth mapping from the parametric cube
$
(\xi,\eta,\zeta)\in [0,1]^3
$
to the physical domain. Introducing
$
a = 2\xi-1,
\;
b = 2\eta-1,
\;
c = 2\zeta-1,
$ the cube-to-ball transformation is defined by
\begin{align}
x &= a\sqrt{
1-\frac{b^2}{2}-\frac{c^2}{2}
+\frac{b^2c^2}{3}}, \\
y &= b\sqrt{
1-\frac{a^2}{2}-\frac{c^2}{2}
+\frac{a^2c^2}{3}}, \\
z &= c\sqrt{
1-\frac{a^2}{2}-\frac{b^2}{2}
+\frac{a^2b^2}{3}}.
\end{align}

This mapping transforms the unit cube into a smooth approximation of the unit ball while preserving tensor-product spline parameterization.
The approximate solution is represented in the parametric domain using tensor-product B-splines:
\begin{equation}
u_h(\xi,\eta,\zeta)
=
\sum_{i=1}^{n_x}
\sum_{j=1}^{n_y}
\sum_{k=1}^{n_z}
C_{ijk}
B_i(\xi)
B_j(\eta)
B_k(\zeta),
\end{equation}
where $B_i$, $B_j$, and $B_k$ denote univariate B-spline basis functions.
For verification purposes, we employ the manufactured solution
\begin{equation}
u(\xi,\eta,\zeta)
=
\sin(\kappa\pi\xi)
\sin(\kappa\pi\eta)
\sin(\kappa\pi\zeta),
\end{equation}
which yields the forcing term
\begin{equation}
f(\xi,\eta,\zeta)
=
\left(
3\kappa^2\pi^2+\kappa^2
\right)
u(\xi,\eta,\zeta).
\end{equation}

The residual minimization formulation is constructed directly in the parametric domain:
\begin{equation}
R(\xi,\eta,\zeta)
=
-\Delta_{\xi,\eta,\zeta} u_h
+
\kappa^2 u_h
-
f,
\end{equation}
and the spline coefficients are obtained by minimizing the discrete least-squares functional
\begin{equation}
\mathcal{J}(C)
=
\frac12
\sum_{m=1}^{N_c}
R(\xi_m,\eta_m,\zeta_m)^2,
\end{equation}
evaluated at collocation points inside the parametric cube.
The resulting linear system is solved after elimination of boundary degrees of freedom corresponding to the cube boundary, which maps onto the spherical boundary in physical space.
First we use $16\times 16\times 16$ elements and cubic B-splines with $30\times 30\times 30$ collocation points for $\kappa=6$. The problem takes 0.74 sec on single core MATLAB with Gauss-Newton method. The accuracy in $L^2$ norm is 5.425967e-03. 
Figure~\ref{fig:helmholtz_ball} presents multi-slice visualizations of the numerical solution inside the ball geometry. The smooth spline representation accurately resolves oscillatory Helmholtz modes while preserving geometric regularity of the mapped domain.

Second. we use $32\times 32\times 32$ elements with cubic B-splines with $64\times 64 \times 64$ collocation points for $\kappa=20$. The problem takes 23.43 sec on single core MATLAB with Gauss-Newton method. The resulting accuracy in $L^2$ norm is 4.742797e-02. This problem solution is illustrated in Figure \ref{fig:ball20}.

\begin{figure}[h!]
\centering
\includegraphics[width=0.9\textwidth]{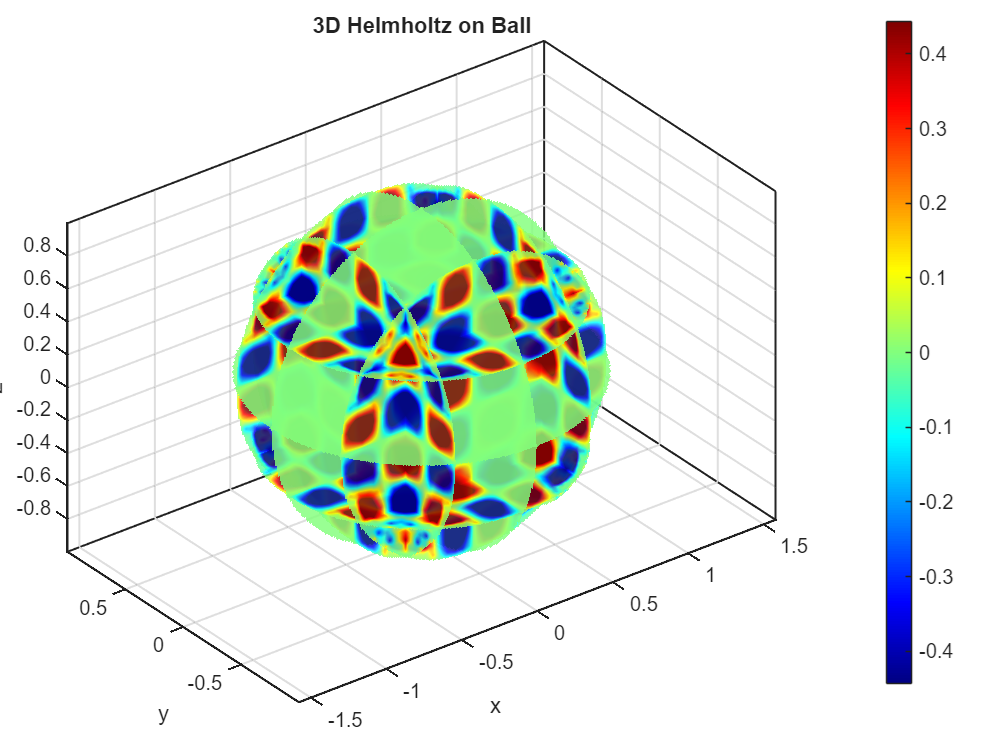}
\caption{Multi-slice visualization of the 3D Helmholtz solution inside the mapped ball geometry using cubic B-splines and residual minimization.  $16\times 16\times 16$ elements and cubic B-splines with $30\times 30\times 30$ collocation points for $\kappa=6$. } 
\label{fig:helmholtz_ball}
\end{figure}

\begin{figure}[h!]
\centering
\includegraphics[width=0.9\textwidth]{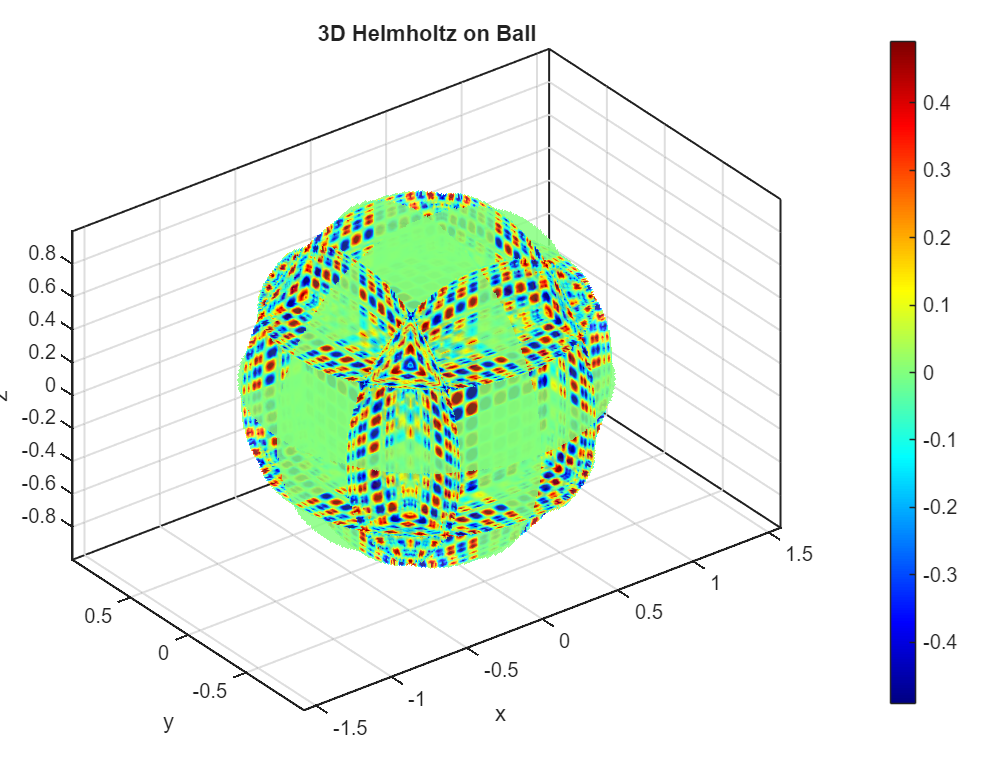}
\caption{Multi-slice visualization of the 3D Helmholtz solution inside the mapped ball geometry using cubic B-splines and residual minimization.  $32\times 32\times 32$ elements and cubic B-splines with $64\times 64\times 64$ collocation points for $\kappa=20$. } 
\label{fig:ball20}
\end{figure}
\section{Inverse Problems}

IGA-ODIL naturally extends to PDE-constrained inverse problems due to its residual-based optimization formulation. In contrast to classical finite element approaches, where inverse problems often require repeated nested forward PDE solves, the proposed framework simultaneously optimizes both the state variables and unknown physical parameters within a unified optimization procedure.
We consider inverse identification of parameters in the Helmholtz equation:
\begin{equation}
-\Delta u + \alpha u = f,
\quad (x,y)\in\Omega,
\label{eq:helmholtz_inverse}
\end{equation}
subject to homogeneous Dirichlet boundary conditions
\begin{equation}
u(x,y)=0,
\quad (x,y)\in\partial\Omega.
\end{equation}

A manufactured solution is prescribed as
\begin{equation}
u(x,y)=
\sin(\kappa_{\mathrm{true}}\pi x)
\sin(\kappa_{\mathrm{true}}\pi y),
\end{equation}
where $\kappa_{\mathrm{true}}$ denotes the unknown frequency parameter to be recovered.
The corresponding forcing term becomes
\begin{equation}
f(x,y)=
\left(
2(\kappa_{\mathrm{true}}\pi)^2+\alpha
\right)
\sin(\kappa_{\mathrm{true}}\pi x)
\sin(\kappa_{\mathrm{true}}\pi y).
\end{equation}

The objective of the inverse problem is to recover the unknown parameter $\kappa$ from observations of the solution field.

\subsection{IGA-ODIL Formulation for Inverse Problems}

The spline approximation is written as
\begin{equation}
u_c(x,y)=
\sum_{i=1}^{N_x}
\sum_{j=1}^{N_y}
c_{ij}
B_i^{(p)}(x)
B_j^{(q)}(y).
\label{eq:inverse_spline}
\end{equation}
The inverse optimization problem is formulated as
\begin{equation}
\min_{c,\kappa}
\left(
\|R(c,\kappa)\|^2
+
\lambda
\|u_c-u_{\mathrm{obs}}\|^2
\right),
\label{eq:inverse_objective}
\end{equation}
where $\lambda>0$ is a regularization parameter and $u_{\mathrm{obs}}$ denotes observed solution data.
The spline coefficients $c_{ij}$ and parameter $\kappa$ are optimized simultaneously.
The residual is given by
\begin{equation}
R_k(c,\kappa)=
-\Delta u_c(x_k,y_k)
+
\kappa^2 u_c(x_k,y_k)
-
f(x_k,y_k).
\end{equation}

Expanding the spline representation yields
\begin{align}
R_k(c,\kappa)
=
&
\sum_{i,j}
c_{ij}
\Bigg[
-
\frac{\partial^2 B_i^{(p)}(x_k)}{\partial x^2}
B_j^{(q)}(y_k)
\nonumber\\
&
-
B_i^{(p)}(x_k)
\frac{\partial^2 B_j^{(q)}(y_k)}{\partial y^2}
+
\kappa^2
B_i^{(p)}(x_k)
B_j^{(q)}(y_k)
\Bigg]
-f(x_k,y_k).
\end{align}

The data misfit term becomes
\begin{equation}
\|u_c-u_{\mathrm{obs}}\|^2
=
\sum_k
\left(
u_c(x_k,y_k)-u_{\mathrm{obs}}(x_k,y_k)
\right)^2.
\end{equation}

The complete optimization functional is therefore
\begin{align}
\Phi(c,\kappa)
=
\frac12
\sum_k
R_k(c,\kappa)^2
+
\frac{\lambda}{2}
\sum_k
\left(
u_c(x_k,y_k)-u_{\mathrm{obs}}(x_k,y_k)
\right)^2.
\label{eq:inverse_full_loss}
\end{align}

\subsection{Gauss--Newton Linearization}

Let
\begin{equation}
\theta=
\begin{bmatrix}
c \\
\kappa
\end{bmatrix}
\end{equation}
denote the vector of optimization variables.
The residual vector is linearized as
\begin{equation}
R(\theta+\delta\theta)
\approx
R(\theta)
+
J(\theta)\delta\theta,
\end{equation}
where
\begin{equation}
J(\theta)=
\frac{\partial R}{\partial\theta}
\end{equation}
is the Jacobian matrix.
The Gauss-Newton system becomes
\begin{equation}
J^T J\,\delta\theta
=
-J^T R.
\label{eq:inverse_gn}
\end{equation}

Due to the local support of B-spline basis functions, the Jacobian remains sparse with respect to spline coefficients. The additional parameter $\kappa$ introduces only one dense column corresponding to
\begin{equation}
\frac{\partial R_k}{\partial\kappa}
=
2\kappa
u_c(x_k,y_k).
\end{equation}

Consequently, the resulting system preserves favorable sparsity properties and remains computationally tractable.

\subsection{Numerical Results}

We first consider the recovery of the parameter
$\kappa_{\mathrm{true}}=2.0$
starting from the initial guess
$\kappa_{\mathrm{initial}}=1.0$.
The computation employs $30\times30$ elements,
cubic B-splines,
$40\times40$ collocation points, and 100 Gauss–Newton iterations.
Figure~\ref{fig:inverse_conv_kappa2} illustrates the convergence of the recovered parameter.
The corresponding reconstructed solution is shown in Figure~\ref{fig:inverse_sol_kappa2}.
\begin{figure}[ht]
\centering
\includegraphics[width=0.9\textwidth]{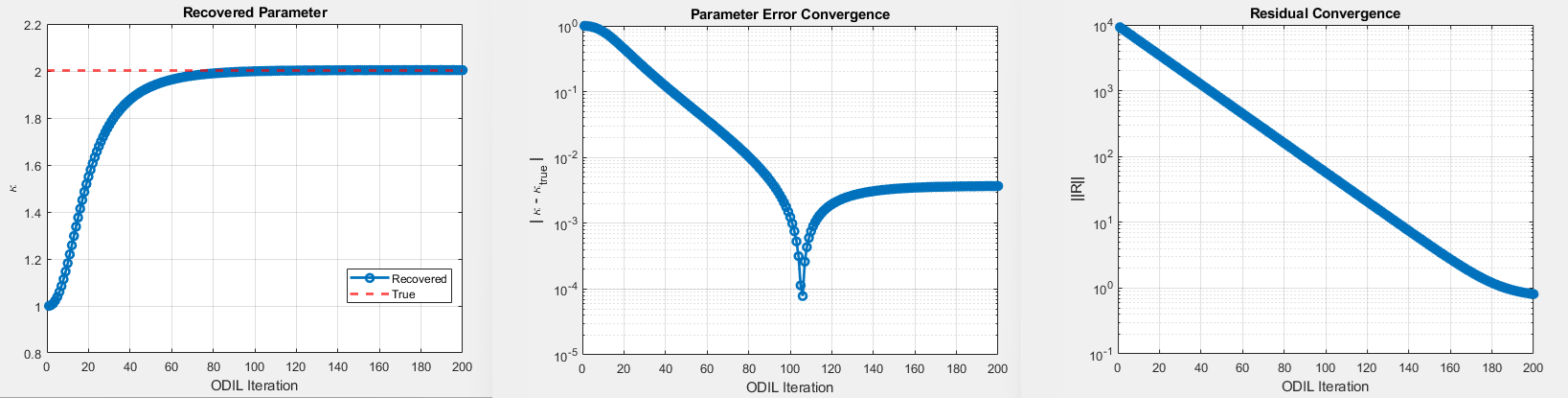}
\caption{Convergence of inverse Helmholtz parameter recovery for $\kappa_{\mathrm{true}}=2$.}
\label{fig:inverse_conv_kappa2}
\end{figure}
\begin{figure}[ht]
\centering
\includegraphics[width=0.95\textwidth]{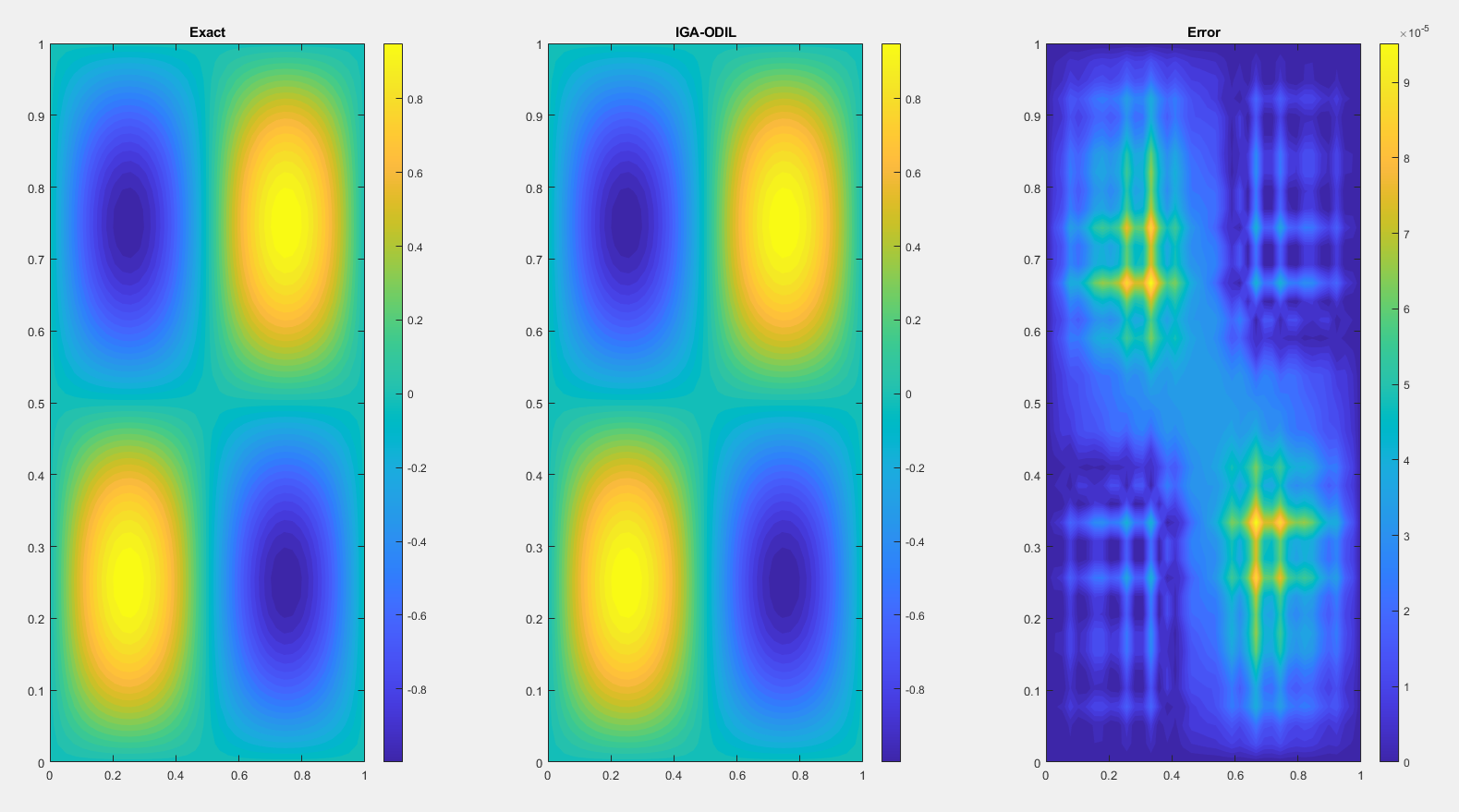}
\caption{Recovered Helmholtz solution for $\kappa_{\mathrm{true}}=2$.}
\label{fig:inverse_sol_kappa2}
\end{figure}

The parameter converges rapidly toward the exact value while the residual decreases monotonically.
We next consider a substantially more oscillatory regime with
$\kappa_{\mathrm{true}}=8$.
The computational setup uses  $40\times40$ spline elements,
cubic B-splines,
$80\times80$ collocation points, and
 100 Gauss-Newton iterations.
Figure~\ref{fig:inverse_conv_kappa8} presents the convergence histories for the recovered parameter.
The reconstructed oscillatory solution is shown in Figure~\ref{fig:inverse_sol_kappa8}.

\begin{figure}[ht]
\centering
\includegraphics[width=0.9\textwidth]{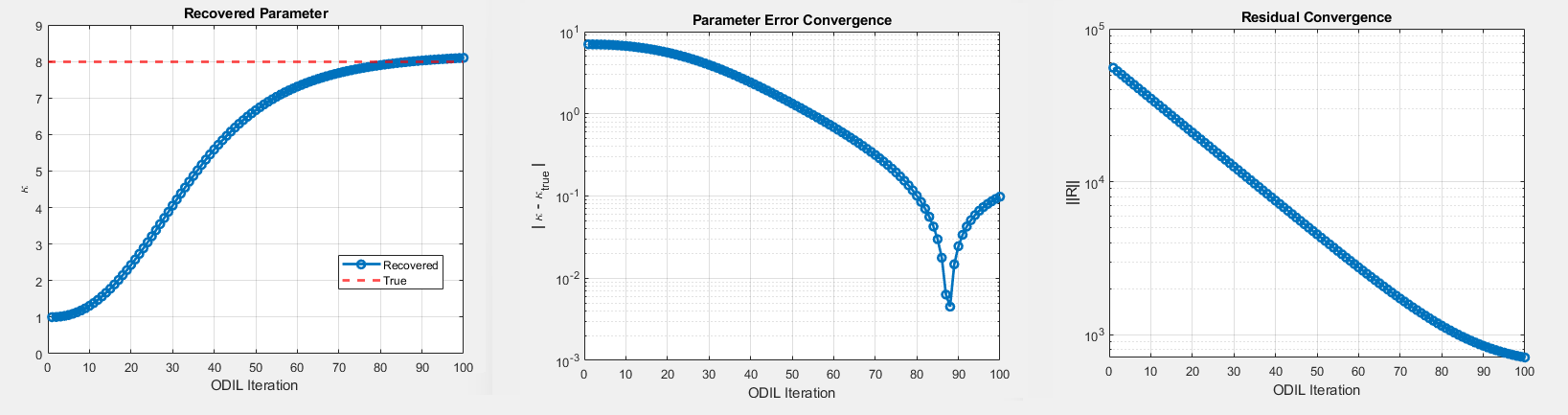}
\caption{Convergence of inverse Helmholtz parameter recovery for $\kappa_{\mathrm{true}}=8$.}
\label{fig:inverse_conv_kappa8}
\end{figure}
\begin{figure}[ht]
\centering
\includegraphics[width=0.95\textwidth]{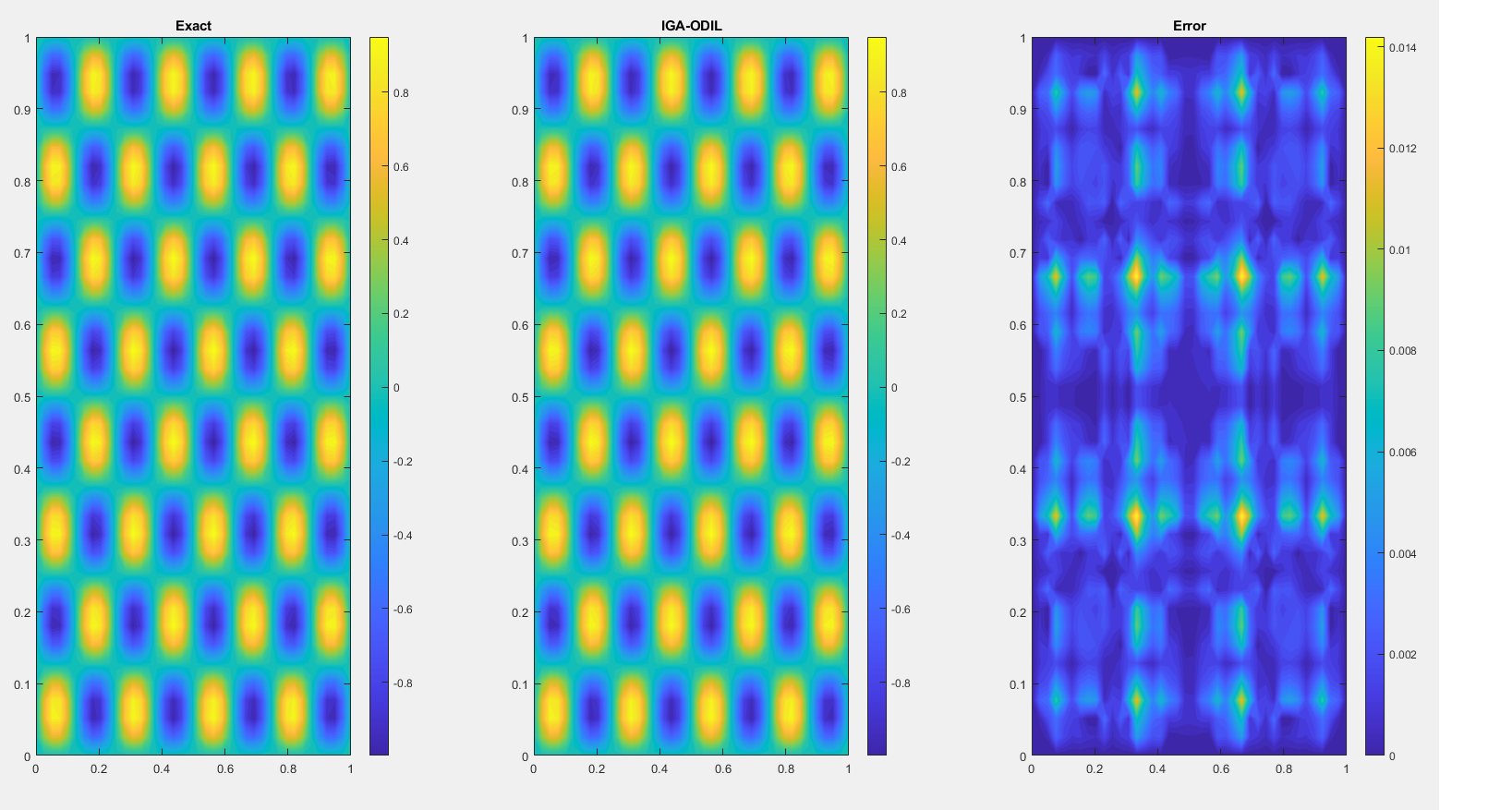}
\caption{Recovered Helmholtz solution for $\kappa_{\mathrm{true}}=8$.}
\label{fig:inverse_sol_kappa8}
\end{figure}
Even in highly oscillatory regimes, the proposed formulation successfully recovers both the parameter and solution field.
The computational time is approximately $9.8$ seconds for $\kappa=2$, and  $59$ seconds for $\kappa=8$,
on a single CPU core in MATLAB.
These results demonstrate that IGA-ODIL naturally extends to PDE-constrained inverse problems while preserving the favorable sparsity and optimization properties of the forward formulation.

\section{Conclusions}

The proposed IGA-ODIL framework bridges scientific machine learning and classical numerical PDE discretization theory.
The key conceptual observation is that residual minimization becomes computationally tractable when combined with structured spline parameterizations.
The numerical experiments demonstrate that the proposed method provides a simple and flexible framework for solving a wide range of PDEs. By operating directly on the strong form of the governing equations, the method avoids the need for variational formulations and numerical quadrature, which simplifies implementation compared to classical Galerkin-based approaches.
The use of spline basis functions yields smooth approximations with high-order continuity, enabling accurate evaluation of differential operators and reducing the number of degrees of freedom required for a given accuracy. This is particularly advantageous for problems involving higher-order derivatives or smooth solutions.
Furthermore, the formulation naturally leads to a least-squares problem with a structured Jacobian, allowing the use of efficient Gauss–Newton or Newton-type solvers. For linear problems, the method reduces to a single linear solve, while for nonlinear problems, it exhibits rapid (often quadratic) convergence under standard assumptions.

The proposed framework is closely related to least-squares finite element methods (LSFEM) \cite{bochev2009least}, but differs in the choice of approximation space. While LSFEM typically employs low-order polynomial basis functions, the present approach utilizes smooth spline spaces from isogeometric analysis, leading to improved approximation properties.
Compared to classical isogeometric Galerkin methods \cite{iga}, the present formulation eliminates the need for weak forms and numerical integration, providing a more direct residual-based approach. However, this also implies that the method inherits some characteristics of collocation-type methods.
In contrast to physics-informed neural networks (PINNs) \cite{raissi}, the method uses a deterministic basis representation with a structured Jacobian, resulting in improved conditioning, reproducibility, and convergence behavior.
The IGA-ODIL differs from the ODIL method \cite{odil} since it uses spline combination instead of discrete point values for training (optimization).

Despite its advantages, the proposed method has several limitations that warrant further investigation.
First, the use of strong-form residuals requires sufficient regularity of the exact solution, as the differential operator is applied directly to the approximate solution. This may limit applicability to problems with low regularity or discontinuities.
Second, the collocation-based approximation of the residual introduces a dependence on the choice and distribution of collocation points. While uniform grids perform well for smooth problems, more complex geometries or localized features may require adaptive sampling strategies.
Third, for convection-dominated problems with very small diffusion coefficients, additional stabilization mechanisms or weighted residual formulations may be necessary to fully resolve sharp boundary layers without oscillations.
Fourth, the formation of the Jacobian matrix can be computationally expensive for large-scale problems, particularly in higher dimensions. Although the structure of the spline basis can be exploited, further work is needed to develop efficient matrix-free or low-rank implementations.
Finally, while Gauss-Newton iterations perform well in the examples considered, global convergence is not guaranteed for strongly nonlinear problems. In such cases, damping strategies or line search techniques may be required.

\appendix
\section{Code availability}

The MATLAB codes employed in the paper to solve IGA-ODIL problems are available at

{\tt https://github.com/sluzalec/IGA-ODIL}

The Python codes used for running PINN and CRVPINN experiments are available in paper \cite{crvpinn}.

\section{Convergence of IGA-ODIL}

\begin{theorem}[Convergence of robust spline-based residual minimization]
\label{thm:robust_iga_odil}

Let $\Omega \subset \mathbb{R}^d$ be a bounded Lipschitz domain, and consider the boundary value problem
\begin{equation}
\mathcal{L}u = f
\quad \text{in } \Omega,
\label{eq:general_pde}
\end{equation}
supplemented with boundary conditions such that the problem is well posed.

Assume that:

\begin{enumerate}

\item
$\mathcal{L}:H^m(\Omega)\to L^2(\Omega)$
is a linear differential operator of order
$
s\le m.
$

\item
The exact solution satisfies
$
u\in H^{p+1}(\Omega).
$

\item
The spline approximation space
$
V_h\subset H^m(\Omega)
$
consists of tensor-product B-splines of degree
$
p
$
constructed on a quasi-uniform mesh with characteristic size
$
h.
$

\item
The collocation points
$
\mathcal{X}_M=\{x_k\}_{k=1}^M\subset\Omega
$
are quasi-uniform with fill distance
$
h_c
:=
\sup_{x\in\Omega}
\min_{x_k\in\mathcal{X}_M}
\|x-x_k\|,
$
satisfying
$
h_c\lesssim h.
$

\item
The discrete robust residual minimization problem
\begin{equation}
u_h
=
\arg\min_{v_h\in V_h}
\Phi_h(v_h)
\label{eq:robust_discrete_problem}
\end{equation}
is defined by the functional
\begin{equation}
\Phi_h(v_h)
=
\frac12
R(v_h)^T
G^{-1}
R(v_h),
\label{eq:robust_functional}
\end{equation}
where
\begin{equation}
R(v_h)
=
\left[
\mathcal{L}v_h(x_1)-f(x_1),
\dots,
\mathcal{L}v_h(x_M)-f(x_M)
\right]^T
\in\mathbb{R}^M,
\end{equation}
and
$
G\in\mathbb{R}^{M\times M}
$
is a symmetric positive definite Gram matrix.

\item
The operator $\mathcal{L}$ satisfies the stability estimate
\begin{equation}
\|w\|_{H^m(\Omega)}
\le
C_{\mathrm{stab}}
\|\mathcal{L}w\|_{L^2(\Omega)}
\quad
\forall w\in H_0^m(\Omega).
\label{eq:stability_assumption_theorem}
\end{equation}

\item
The collocation sampling satisfies the norm equivalence
\begin{equation}
\|v_h\|_{L^2(\Omega)}^2
\sim
\sum_{k=1}^{M}
|v_h(x_k)|^2
\quad
\forall v_h\in V_h,
\label{eq:sampling_equivalence_theorem}
\end{equation}
with constants independent of $h$.

\item
The Gram matrix induces a uniformly equivalent discrete norm:
\begin{equation}
c_G\|r\|_2^2
\le
r^TG^{-1}r
\le
C_G\|r\|_2^2
\quad
\forall r\in\mathbb{R}^M,
\label{eq:gram_equivalence_theorem}
\end{equation}
with constants independent of $h$.

\end{enumerate}

Then the robust IGA--ODIL approximation satisfies the error estimate
\begin{equation}
\|u-u_h\|_{H^m(\Omega)}
\le
C
\|R(u_h)\|_G,
\label{eq:error_equivalence}
\end{equation}
where
\begin{equation}
\|R(u_h)\|_G
:=
\left(
R(u_h)^TG^{-1}R(u_h)
\right)^{1/2}.
\label{eq:robust_norm}
\end{equation}

Moreover,
\begin{equation}
\|R(u_h)\|_G
\le
C
h^{p+1-s}
|u|_{H^{p+1}(\Omega)},
\label{eq:residual_convergence_theorem}
\end{equation}
and therefore
\begin{equation}
\|u-u_h\|_{H^m(\Omega)}
=
\mathcal{O}(h^{p+1-s}).
\label{eq:final_rate_theorem}
\end{equation}

\end{theorem}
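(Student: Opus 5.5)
The proof splits into two parts, matching the two claims of Theorem~\ref{thm:robust_iga_odil}: an a posteriori bound turning the robust loss into an $H^m$ error bound, and an a priori bound on the minimal loss obtained by comparing with a quasi-interpolant. For the first part, set $e=u-u_h$. Since $u_h\in V_h$ and $u$ satisfy the same (strongly imposed) boundary conditions, we have $e\in H^m_0(\Omega)$, and the stability assumption \eqref{eq:stability_assumption_theorem} gives $\|e\|_{H^m}\le C_{\mathrm{stab}}\|\mathcal{L}e\|_{L^2}$. Because $\mathcal{L}u=f$, we get $\mathcal{L}e=f-\mathcal{L}u_h$, and its point values at the $x_k$ are exactly $-R(u_h)_k$. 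We therefore want to pass from $\|\mathcal{L}e\|_{L^2}$ to the discrete sum $\sum_k|R_k|^2$ and then to $\|R\|_G^2$ via the lower bound in \eqref{eq:gram_equivalence_theorem}, giving $\|R\|_2^2\le c_G^{-1}\|R\|_G^2$.

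The middle passage is the main obstacle. The sampling equivalence \eqref{eq:sampling_equivalence_theorem} holds only on $V_h$, whereas $\mathcal{L}e=\mathcal{L}u_h-f$ lies in $\mathcal{L}V_h+f$. I would first apply the equivalence to the space $W_h=\mathcal{L}V_h$, which for constant-coefficient $\mathcal{L}$ is again a piecewise polynomial spline space of degree $p-s$ on the same mesh. This step may need to be stated as an implicit strengthening of assumption 7, and I would flag it as such. I would then split $f=\mathcal{L}u=\mathcal{L}\Pi_h u+\mathcal{L}(u-\Pi_h u)$, where $\Pi_h$ is the standard spline quasi-interpolant. This yields
\[
\|\mathcal{L}e\|_{L^2}\le \|\mathcal{L}(u_h-\Pi_h u)\|_{L^2}+\|\mathcal{L}(u-\Pi_h u)\|_{L^2},
\]
and the first term is bounded by sampled values through the equivalence on $W_h$. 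The leftover data-oscillation term $\|\mathcal{L}(u-\Pi_h u)\|_{L^2}$, together with its sampled counterpart, is of order $h^{p+1-s}|u|_{H^{p+1}}$. One can either absorb it into the constant, which gives \eqref{eq:error_equivalence} modulo a higher-order term of the same rate, or, to keep the statement clean, observe that it is controlled by the residual bound of the second part. Honestly, the estimate \eqref{eq:error_equivalence} holds cleanly only up to this oscillation term, and I would state it in that form.

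For the a priori bound \eqref{eq:residual_convergence_theorem}, use the minimizing property \eqref{eq:robust_discrete_problem}: $\|R(u_h)\|_G\le\|R(\Pi_h u)\|_G\le C_G^{1/2}\|R(\Pi_h u)\|_2$ by the upper bound in \eqref{eq:gram_equivalence_theorem}. Here $R(\Pi_h u)_k=\mathcal{L}(\Pi_h u-u)(x_k)$. Quasi-uniformity of the points, $h_c\lesssim h$ (so $M\sim h^{-d}$), makes the Euclidean sum scale-consistent only if it carries the weight $h^{d}$, so I would take $G$ to include the quadrature scaling implicit in assumption 8. The local spline approximation estimates in $W^{s,\infty}$ on each element then give $|\mathcal{L}(u-\Pi_h u)(x_k)|\lesssim h^{p+1-s-d/2}|u|_{H^{p+1}(\tilde K)}$ on the support extension $\tilde K$; this requires a Sobolev embedding, or alternatively an inverse-inequality argument in which sampling is applied to $\mathcal{L}(\Pi_h u-\Pi_h^{\mathrm{hi}}u)$. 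Summing over the bounded number of points per element yields $Ch^{p+1-s}|u|_{H^{p+1}}$. Combining the two parts gives \eqref{eq:final_rate_theorem}.
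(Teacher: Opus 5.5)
Your proof has the same skeleton as the paper's. You compare with an interpolant via minimality and the upper Gram bound to get \eqref{eq:residual_convergence_theorem}, and you use stability plus the lower Gram bound for \eqref{eq:error_equivalence}. You depart from it exactly where the paper's argument is invalid, and your departure is correct. In \eqref{eq:proof_sampling} and \eqref{eq:proof_sampling_residual} the paper applies the sampling equivalence \eqref{eq:sampling_equivalence_theorem} to $\mathcal{L}(\Pi_h u-u)$ and to $f-\mathcal{L}u_h$, and neither function lies in $V_h$.

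Your hedged version of \eqref{eq:error_equivalence} is not a weakness, because with a constant independent of $u$ the statement is false. The discrete problem sees $f$ only through $f(x_1),\dots,f(x_M)$. Replace $f$ by $f+tg$, where $g\neq0$ is smooth and vanishes at every $x_k$. Then $u_h$ and $R(u_h)$ are unchanged, $u$ moves by $t\mathcal{L}^{-1}g$, and $\|u-u_h\|_{H^m}\to\infty$ as $t\to\infty$. So state the a posteriori bound as
\[
\|u-u_h\|_{H^m(\Omega)}\lesssim \|R(u_h)\|_G+\|R(\Pi_hu)\|_G+\|\mathcal{L}(u-\Pi_hu)\|_{L^2(\Omega)},
\]
which with your second part still gives \eqref{eq:final_rate_theorem}. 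Drop the suggestion that the extra terms can be absorbed into the constant. They are of the same order as the error, not higher order, and the example shows no bound by $\|R(u_h)\|_G$ alone exists.

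Your strengthening of assumption 7 from $V_h$ to $\mathcal{L}V_h$ is the right one. It is the discrete stability of the sampled operator that least-squares collocation actually needs, and it also gives uniqueness of $u_h$. Assumption 7 on $V_h$ itself is never legitimately used in the paper's proof.

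Four points in your own argument need repair.

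\textbf{Scaling.} Assumption 7 as written cannot hold with $h$-independent constants once $M\to\infty$: for $v_h\equiv1$ the sum is $M$, while $\|v_h\|_{L^2}^2=|\Omega|$. So a quadrature weight is needed. Putting it into $G$ while keeping assumption 8 literal is inconsistent, since $G^{-1}$ would then have eigenvalues of the size of that weight. Read both assumptions 7 and 8 with the same weighted norm $\omega\|r\|_2^2$. Take $\omega\sim h_c^d$ rather than $h^d$. The condition $h_c\lesssim h$ gives $M\gtrsim h^{-d}$, not $M\sim h^{-d}$, and the experiments use several points per element per direction. Hence your count of points per element must be $(h/h_c)^d$; with $\omega\sim h_c^d$ this still yields $h^{p+1-s}$.

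\textbf{Boundary conditions.} The claim $e\in H^m_0(\Omega)$ is false for $m\ge2$. Equal Dirichlet data give $e=0$ on $\partial\Omega$ but not $\partial_n e=0$. Moreover, data such as $\sin(\pi y)$ are not exactly representable in $V_h$. Assumption 6 must therefore be read on $H^m$ functions satisfying the problem's homogeneous boundary conditions, e.g.\ $H^2\cap H^1_0$ via elliptic regularity.

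\textbf{Regularity.} Your pointwise bound $|\mathcal{L}(u-\Pi_hu)(x_k)|\lesssim h^{p+1-s-d/2}|u|_{H^{p+1}(\tilde K)}$ needs $p+1-s>d/2$, which is not among the hypotheses. It fails for $p=s=d=2$, where $f\in H^1$ need not even have point values, so add it explicitly. Your $\Pi_h^{\mathrm{hi}}$ alternative does not remove this requirement. It still leaves point values of the non-spline remainder $\mathcal{L}(u-\Pi_h^{\mathrm{hi}}u)$. The paper's proof silently needs these same repairs.

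\textbf{Order of the operator.} Assumptions 1 and 6 are compatible only when $s=m$. For $s<m$ they would give $\|w\|_{H^m}\lesssim\|w\|_{H^s}$ on $H^m_0$, which is false for oscillatory $w$. So the rate actually proved is $h^{p+1-m}$; this does not affect the structure of your argument.
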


\begin{proof}

\noindent
\textbf{Step 1: Spline approximation property.}

Since
$
u\in H^{p+1}(\Omega),
$
classical spline approximation theory
(cf.\ Ciarlet~\cite{ciarlet2002finite},
Schumaker~\cite{schumaker2007spline},
Bazilevs et al.~\cite{bazilevs2006isogeometric})
implies the existence of a spline interpolant
$
\Pi_hu\in V_h
$
such that
\begin{equation}
\|u-\Pi_hu\|_{H^m(\Omega)}
\le
Ch^{p+1-m}
|u|_{H^{p+1}(\Omega)}.
\label{eq:proof_interp}
\end{equation}

Since $\mathcal{L}$ is a differential operator of order $s$,
standard continuity estimates yield
\begin{equation}
\|\mathcal{L}(u-\Pi_hu)\|_{L^2(\Omega)}
\le
Ch^{p+1-s}
|u|_{H^{p+1}(\Omega)}.
\label{eq:proof_operator_error}
\end{equation}

\noindent
\textbf{Step 2: Minimality of the robust residual functional.}

Since
$
u_h
=
\arg\min_{v_h\in V_h}
\Phi_h(v_h),
$
we have
\begin{equation}
\Phi_h(u_h)
\le
\Phi_h(\Pi_hu).
\label{eq:proof_minimality}
\end{equation}

Using the definition of the robust functional,
$
\Phi_h(v_h)
=
\frac12
R(v_h)^TG^{-1}R(v_h),
$
this becomes
\begin{equation}
R(u_h)^TG^{-1}R(u_h)
\le
R(\Pi_hu)^TG^{-1}R(\Pi_hu).
\label{eq:proof_robust_minimality}
\end{equation}

Since the exact solution satisfies
$
\mathcal{L}u=f,
$
the residual associated with the interpolant satisfies
$
R(\Pi_hu)
=
\left[
\mathcal{L}(\Pi_hu-u)(x_k)
\right]_{k=1}^{M}.
$

Using the Gram norm equivalence
\eqref{eq:gram_equivalence_theorem},
\begin{equation}
R(\Pi_hu)^TG^{-1}R(\Pi_hu)
\le
C_G
\sum_{k=1}^{M}
|\mathcal{L}(\Pi_hu-u)(x_k)|^2.
\label{eq:proof_gram_upper}
\end{equation}

Applying the sampling equivalence
\eqref{eq:sampling_equivalence_theorem},
\begin{equation}
\sum_{k=1}^{M}
|\mathcal{L}(\Pi_hu-u)(x_k)|^2
\le
C
\|\mathcal{L}(\Pi_hu-u)\|_{L^2(\Omega)}^2.
\label{eq:proof_sampling}
\end{equation}

Combining
\eqref{eq:proof_operator_error},
\eqref{eq:proof_gram_upper},
and
\eqref{eq:proof_sampling}
yields
\begin{equation}
R(u_h)^TG^{-1}R(u_h)
\le
Ch^{2(p+1-s)}
|u|_{H^{p+1}(\Omega)}^2.
\label{eq:proof_residual_bound_sq}
\end{equation}

Taking square roots gives
\begin{equation}
\|R(u_h)\|_G
\le
Ch^{p+1-s}
|u|_{H^{p+1}(\Omega)}.
\label{eq:proof_residual_bound}
\end{equation}

This proves
\eqref{eq:residual_convergence_theorem}.

\noindent
\textbf{Step 3: Stability estimate.}

Define the approximation error
$
e_h:=u-u_h.
$
Using linearity of the operator,
$
\mathcal{L}e_h
=
f-\mathcal{L}u_h.
$
Applying the stability estimate
\eqref{eq:stability_assumption_theorem},
\begin{equation}
\|e_h\|_{H^m(\Omega)}
\le
C_{\mathrm{stab}}
\|f-\mathcal{L}u_h\|_{L^2(\Omega)}.
\label{eq:proof_stability}
\end{equation}

Using the sampling equivalence,
\begin{equation}
\|f-\mathcal{L}u_h\|_{L^2(\Omega)}^2
\sim
\sum_{k=1}^{M}
|\mathcal{L}u_h(x_k)-f(x_k)|^2.
\label{eq:proof_sampling_residual}
\end{equation}

Applying the Gram equivalence
\eqref{eq:gram_equivalence_theorem},
\begin{equation}
\sum_{k=1}^{M}
|\mathcal{L}u_h(x_k)-f(x_k)|^2
\le
c_G^{-1}
R(u_h)^TG^{-1}R(u_h).
\label{eq:proof_gram_lower}
\end{equation}

Therefore,
\begin{equation}
\|f-\mathcal{L}u_h\|_{L^2(\Omega)}
\le
C
\|R(u_h)\|_G.
\label{eq:proof_residual_norm}
\end{equation}

Substituting into
\eqref{eq:proof_stability}
gives
\begin{equation}
\|u-u_h\|_{H^m(\Omega)}
\le
C
\|R(u_h)\|_G.
\label{eq:proof_error_estimate}
\end{equation}

Finally, combining
\eqref{eq:proof_error_estimate}
with
\eqref{eq:proof_residual_bound}
yields
\begin{equation}
\|u-u_h\|_{H^m(\Omega)}
\le
Ch^{p+1-s}
|u|_{H^{p+1}(\Omega)}.
\end{equation}

Hence,
$
\|u-u_h\|_{H^m(\Omega)}
=
\mathcal{O}(h^{p+1-s}),
$
which proves
\eqref{eq:final_rate_theorem}.

\end{proof}

\noindent{\bf Acknowledgments} This work has been supported by the National Science Centre, Poland grant no. 
2025/57/B/ST6/00058. 
This work has received funding from the European Union's Horizon Europe research and innovation programme under the Marie Sklodowska-Curie grant agreement No 101119556.
The authors are grateful for the support from the funds that the Polish Ministry of Science and Higher Education assigned to AGH University of Krakow. The work is supported by the “Excellence initiative - research university” for AGH University of Krakow.

\noindent{\bf Declaration of Generative AI and AI-assisted technologies in the writing process}
During the preparation of this work, the author(s) used ChatGPT to assist with writing, correct English language usage, and improve text clarity and organization. After using this tool/service, the author(s) reviewed and edited the content as needed and take(s) full responsibility for the content of the publication.

\end{document}